\def\rr{{\mathbb R}}
\def\rn{{{\rr}^n}}
\def\zz{{\mathbb Z}}
\def\nn{{\mathbb N}}
\def\cd{{\mathcal D}}
\def\cf{{\mathcal F}}
\def\cb{{\mathcal B}}
\def\cx{{\mathcal X}}
\def\fz{\infty}
\def\az{\alpha}
\def\supp{{\mathop\mathrm{\,supp\,}}}
\def\lz{\lambda}
\def\dz{\delta}
\def\ez{\epsilon}
\def\bz{\beta}
\def\wz{\widetilde}
\def\ls{\lesssim}
\def\gs{\gtrsim}
\def\tbz{{\triangle_\lz}}
\def\dmz{{dm_\lz}}
\def\riz{{R_{\Delta_\lz}}}
\def\rrp{{{\mathbb\rr}_+}}
\def\bmoz{{{\rm BMO}(\mathbb\rrp,\, dm_\lz)}}
\def\cmoz{{{\rm CMO}(\mathbb\rrp,\, dm_\lz)}}
\def\inzf{{\int_0^\fz}}
\def\xtz{{x^{2\lz}\,dx}}
\def\ytz{{y^{2\lz}\,dy}}
\def\ztz{{z^{2\lz}\,dz}}
\def\mofi{{M_\lz(f, I)}}
\def\loz{{L^1(\rrp,\, dm_\lz)}}
\def\lpz{{L^p(\rrp,\, dm_\lz)}}
\def\linz{{L^\fz(\rrp,\, dm_\lz)}}
\def\dint{\displaystyle\int}
\def\dfrac{\displaystyle\frac}
\def\r{\right}
\def\lf{\left}
\def\beeqn{\begin{equation}}
\def\eneqn{\end{equation}}
\def\beeqns{\begin{equation*}}
\def\eneqns{\end{equation*}}
\def\beeqa{\begin{eqnarray}}
\def\eneqa{\begin{eqnarray}}
\def\beeqas{\begin{eqnarray*}}
\def\eneqas{\begin{eqnarray*}}
\def\besp{\begin{split}}
\def\ensp{\begin{split}}
\def\noz{\nonumber}
\newtheorem{thm}{Theorem}[section]
\newtheorem{lem}[thm]{Lemma}
\newtheorem{prop}[thm]{Proposition}
\newtheorem{rem}[thm]{Remark}
\newtheorem{defn}[thm]{Definition}
\numberwithin{equation}{section}
\begin{document}

\arraycolsep=1pt

\title{\Large\bf  Compactness of Riesz transform commutator associated with Bessel operators}
\author{Xuan Thinh Duong, Ji Li, Suzhen Mao, Huoxiong Wu and Dongyong Yang\,\footnote{Corresponding author}}

\date{}
\maketitle

\begin{center}
\begin{minipage}{13.5cm}\small

{\noindent  {\bf Abstract:}\  Let $\lambda>0$  and
$\triangle_\lambda:=-\frac{d^2}{dx^2}-\frac{2\lambda}{x}
\frac d{dx}$ be the Bessel operator on $\mathbb R_+:=(0,\infty)$. We first introduce and
obtain an equivalent characterization of ${\rm CMO}(\mathbb R_+,\, x^{2\lambda}dx)$.
By this equivalent characterization and establishing a new version of the Fr\'{e}chet-Kolmogorov theorem
in the Bessel setting, we further prove that a function
$b\in {\rm BMO}(\mathbb R_+,\, x^{2\lambda}dx)$ is in ${\rm CMO}(\mathbb R_+,\, x^{2\lambda}dx)$
if and only if the Riesz transform commutator $[b, R_{\Delta_\lambda}]$ is
compact on $L^p(\mathbb R_+, x^{2\lambda}dx)$ for any $p\in(1, \infty)$.}

\end{minipage}
\end{center}

\bigskip
\bigskip

{ {\it Keywords}: $\cmoz$; commutator; Bessel operator; Riesz transform.}

\medskip

{{Mathematics Subject Classification 2010:} {42B20, 42B35}}

\medskip

\thanks{X. T. Duong is supported by ARC DP 140100649.}

\thanks{J. Li is supported by ARC DP 160100153 and Macquarie University New Staff Grant.}

\thanks{H. Wu is supported by the NNSF of China (Grant Nos. 11371295, 11471041) and the NSF of Fujian Province of China (No. 2015J01025).}

\thanks{D. Yang is supported by the NNSF of China (Grant No. 11571289) and the State Scholarship Fund of China (No. 201406315078).}

\section{Introduction and Statement of Main Results\label{s1}}

Let $\lz$ be a positive constant
and $\tbz$ be the Bessel operator defined by
by setting, for  suitable functions $f$ and  $x\in \rrp :=(0, \fz)$,
\begin{equation*}
\tbz f(x):=-\frac{d^2}{dx^2}f(x)-\frac{2\lz}{x}\frac{d}{dx}f(x);
\end{equation*}
see \cite{bdt,ms}.
An early work concerning the Bessel operator is from Muckenhoupt and Stein \cite{ms}.
They aimed to  develop a theory associated to
$\tbz$ which is parallel to the classical one associated to the Laplace operator $\triangle$.
After that, a lot of work concerning the Bessel operators was carried out. See, for example \cite{ak,bcfr,bfbmt,bfs,bhnv, dlwy, k78,v08,yy} and the references therein. Among the study of $\tbz$,
the properties of Riesz transforms  associated  to $\tbz$   defined by
\begin{equation*}
R_{\Delta_\lambda}f := \partial_x (\Delta_\lz)^{-1/2}f,
\end{equation*}
have been studied extensively, see for example \cite{ak,bcfr,bfbmt, ms,v08}.
Characterizations of function spaces associated to the Bessel operator $\tbz$ were also studied
by many authors. Among these, we point out that the Lebesgue space
associated to the Bessel operator $\tbz$ is of the form $\lpz$,
where $1<p<\infty$, $\dmz(x):= x^{2\lz}\,dx$, and $dx$ is the standard Lebesgue measure on $\mathbb{R}$ (see for example \cite{bdt}).
Moreover, in \cite{bdt}, Betancor et al.
characterized the Hardy space $H^1(\mathbb{R}_+, \dmz)$
associated to $\tbz$ in terms of the Riesz transform and the radial
maximal function associated with the Hankel convolution of a class
of suitable functions. More recently, Duong et al. \cite{dlwy} established a factorisation of the Hardy space associated to $\tbz$
and a characterisation of the BMO space associated to $\tbz$ through commutators $[b,\riz]$, which is defined as follows:
\begin{equation*}
[b, \riz]f(x):= b(x)\riz f(x)-\riz(bf)(x),
\end{equation*}
where $b\in L^1_{\rm loc}(\mathbb{R}_+,\,dm_\lz)$ and $f\in \lpz$.

The aim of this paper is to provide a characterization of the compactness of the Riesz commutator $[b, \riz]$, based on
the characterization in \cite{dlwy}.

We recall that
the first result on  characterization of compactness of commutators of singular integrals is due to Uchiyama \cite{u78}.
He refined the $L^p$-boundedness results of Coifman et al. \cite{crw} on the commutator with the symbol $b$ in the space BMO to compactness.
This is achieved by requiring the symbol $b$ to be not just in BMO, but rather in CMO,
 which is the closure in
BMO of the space of $C^\infty$ functions with compact supports.
Since then, many authors focused on the compactness of commutators with certain singular integrals,
including linear, nonlinear and bilinear operators on variant function spaces. See for example \cite{bl, bt, cc, ch, h, i,  kl1, kl2, msw}
and the references therein.

We further note that the compactness of the commutator has extensive applications in partial differential equations,
see for example the application to
$\bar{\partial}-$Neumann problem on forms \cite[Chapter 12, Section 8]{t}.
Moreover, to study the $L^p$-theory of quasiregular mappings, Iwaniec \cite{i}
considered the linear complex Beltrami equation and derived the $L^p(\mathbb C)$-invertibility of Beltrami operator $I-\mu T$, via
the compactness of the commutator $[\mu, T]$ on $L^p(\mathbb C)$ and the index theory of Fredholm operators on Banach spaces,
where the Beltrami coefficient $\mu\in L^\fz(\mathbb C)\cap {\rm CMO}(\mathbb C)$ has  compact support
 and $T$ is the Beurling-Ahlfors singular integral operator. In their remarkable work \cite{ais},
Astala et al. further extended the result in \cite{i} by removing the restrictive assumption on the compact support of
$\mu$.  Recently, based on the result of Iwaniec \cite{i},
Clop and Cruz \cite{cc} obtained a priori  estimate in $L^p(\omega)$ for the generalized Beltrami equation
and regularity for the Jacobian of certain quasiconformal mappings, where
the weight $\omega$ belongs to the  Muckenhoupt class $A_p$.
See also \cite{mov,cmo} for the application of the compactness of commutator generated by Beurling-Ahlfors transform  and
${\rm CMO}$ functions to  Beltrami equations.

Before stating our main result, we first recall the definition of the BMO space associated with the Bessel operator which is known to coincide with
the standard BMO space on $(\rrp,dm_\lambda)$. For every $x$, $r\in \mathbb{R}_+$, we define $I(x, r):=(x-r, x+r)\cap \mathbb{R}_+$.
\begin{defn}[\cite{yy}]\label{d-bmo}
A function $f\in L^1_{\rm loc}(\rrp,dm_\lambda)$ belongs to
the {\it space} $\bmoz$ if
\begin{equation*}\label{mofi}
\sup_{x,\,r\in (0,\,\infty)}M_\lambda(f,I(x,r)):=\sup_{x,\,r\in (0,\,\infty)}
\frac 1{m_\lambda(I(x,r))}\int_{I(x,\,r)}|f(y)-f_{I(x,\,r),\,\lambda}|y^{2\lambda}dy<\infty,
\end{equation*}
where
\begin{equation}\label{average}
f_{I(x,\,r),\,\lambda}:=\frac 1{m_{\lambda}(I(x,\lambda)}\int_{I(x,\,r)}f(y)y^{2\lambda}dy.
\end{equation}
\end{defn}
We further denote by $\cmoz$
the $\bmoz$-closure of $\cd$, the set of $C^\fz(\rrp)$ functions with compact supports.

The main result of this paper is stated as follows:
\begin{thm}\label{t-riesz compact}
Let $b\in\bmoz$. Then $b\in \cmoz$ if and only if the Riesz transform commutator $[b, R_{\Delta_\lambda}]$ is
compact on $L^p(\mathbb{R}_+,\, dm_\lz)$ for any $p\in(1, \infty)$.
\end{thm}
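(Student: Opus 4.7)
The plan is to prove the two implications separately, following Uchiyama's template adapted to the Bessel setting by exploiting the two new tools announced in the paper: the equivalent characterization of $\cmoz$ and the Fr\'echet--Kolmogorov theorem in $\lpz$.

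For the sufficiency direction ($b\in\cmoz$ implies $[b,\riz]$ compact), I would first establish compactness when $b\in\cd$ and then pass to the limit. For a fixed $b\in\cd$, I would verify the three Bessel Fr\'echet--Kolmogorov hypotheses for the image set $\{[b,\riz]f:\|f\|_{\lpz}\le 1\}$: uniform $\lpz$-boundedness (which follows from the $\lpz$-boundedness of $\riz$ together with $\|b\|_\fz<\fz$); uniform decay outside a sufficiently large interval (which follows from the compact support of $b$ and the size estimate of the kernel of $\riz$); and equicontinuity under the natural Bessel translation (which follows from the smoothness of the kernel of $\riz$ off the diagonal and the smoothness of $b$). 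For general $b\in\cmoz$, I would pick $b_n\in\cd$ with $\|b-b_n\|_{\bmoz}\to 0$ and, using the commutator estimate $\|[b-b_n,\riz]\|_{\lpz\to\lpz}\ls\|b-b_n\|_{\bmoz}$ established in \cite{dlwy}, conclude that $[b,\riz]$ is the operator-norm limit of the compact operators $[b_n,\riz]$ and hence itself compact.

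For the necessity direction ($[b,\riz]$ compact implies $b\in\cmoz$), I would argue by contradiction. Suppose $b\in\bmoz\setminus\cmoz$; by the equivalent characterization, at least one of the limiting oscillation conditions defining $\cmoz$ must fail, so there exist $\dz>0$ and a sequence of intervals $I_n=I(x_n,r_n)$ along which $M_\lz(b,I_n)\ge\dz$, and $I_n$ escapes in one of three Uchiyama-type regimes: $r_n\to 0$, $r_n\to\fz$, or the center drifts out of any fixed compact subset of $\rrp$. I would then construct normalized bumps $f_n\in\lpz$, essentially supported on $I_n$ and with $\int f_n\,\dmz=0$, satisfying $\|f_n\|_{\lpz}\ls 1$ together with a quantitative lower bound of the form $\int_{I_n}(b-f_{I_n,\lz})f_n\,\dmz\gs\dz$. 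Separation of the intervals $I_n$, combined with off-diagonal decay of the kernel of $\riz$, should yield $\|[b,\riz]f_n-[b,\riz]f_m\|_{\lpz}\gs\dz$ for $n\neq m$, producing a bounded sequence whose image under $[b,\riz]$ has no $\lpz$-Cauchy subsequence and contradicting the assumed compactness.

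The main obstacle is the necessity direction, specifically the construction of the test bumps $f_n$ and the lower bound on $\|[b,\riz]f_n-[b,\riz]f_m\|_{\lpz}$. The three failure regimes each require separate localization and kernel estimates, because the measure $\dmz=x^{2\lz}\,dx$ degenerates at $0$ and weights large scales very differently from the Lebesgue measure, and the off-diagonal behavior of the kernel of $\riz$ has to be exploited carefully in each regime. Unifying the three cases into a single Uchiyama-type contradiction---and in particular controlling what happens when the selected intervals accumulate near the boundary point $x=0$---is the delicate step where the new $\cmoz$ characterization and the Bessel-adapted Fr\'echet--Kolmogorov theorem are expected to do the heavy lifting.
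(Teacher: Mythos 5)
Your outline coincides with the paper's architecture: for $b\in\cmoz$ you prove compactness first for $b\in\cd$ by verifying the three hypotheses of the Bessel Fr\'echet--Kolmogorov theorem and then pass to general $b$ by density together with $\|[b-b_n,\riz]\|_{\lpz\to\lpz}\ls\|b-b_n\|_{\bmoz}$; for the converse you run an Uchiyama-type contradiction through the equivalent characterization of $\cmoz$, with mean-zero bumps adapted to intervals of large oscillation. This is exactly the paper's plan.

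There is, however, a genuine gap in the converse direction. The separation estimate $\|[b,\riz]f_n-[b,\riz]f_m\|_{\lpz}\gs\dz$ cannot come from ``off-diagonal decay'' of the kernel: decay only produces upper bounds, whereas the heart of the argument is a \emph{lower} bound for $\int_{I_n^k}|\riz([b-\az_{I_n}(b)]f_n)(y)|^p\,\ytz$ over intervals $I_n^k$ lying to one side of $I_n$. For that one needs to know that $\riz(y,z)$ has a fixed sign and size comparable to $1/m_\lz(I(y,|y-z|))$ whenever $z<y$; otherwise oscillation of the kernel could annihilate $\riz([b-\az_{I_n}(b)]f_n)(y)$ even though your pairing bound $\int_{I_n}(b-\az_{I_n}(b))f_n\,\dmz\gs\dz\,[m_\lz(I_n)]^{1/p'}$ holds. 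The paper establishes precisely this one-sided lower bound in Proposition \ref{p-lower bdd riesz}, working from the explicit formula \eqref{riesz kernel} and the asymptotics in Lemma \ref{l-RieszCZ} iii)--iv); it then writes $[b,\riz]f_n=\riz([b-\az_{I_n}(b)]f_n)-[b-\az_{I_n}(b)]\riz f_n$ and controls the second term from above via John--Nirenberg, so that the lower bound survives for $k$ large. Your proposal never addresses why such a sign/size lower bound for the Bessel Riesz kernel should hold, and without it the contradiction does not close; neither the $\cmoz$ characterization nor the Fr\'echet--Kolmogorov theorem can supply it. Two smaller inaccuracies: in the sufficiency direction the equicontinuity of $[b,\riz]\mathcal F$ also requires the $\lpz$-boundedness of the maximal truncated Riesz transform $\riz_\ast$ to handle the term containing $b(x)-b(x+z)$, not merely smoothness of $b$ and of the kernel; and in the third failure regime (intervals escaping to infinity) one must additionally use condition (ii) of the characterization to force the radii $r_j$ to stay bounded, which is what makes the dilates $A_2I_j^{(3)}$ pairwise disjoint.
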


The  proof of Theorem \ref{t-riesz compact} is carried out from Section \ref{s2} to \ref{s5}, and contains the following ingredients:

(i) The doubling and reverse doubling properties of the space $(\mathbb R_+,\, x^{2\lambda}dx)$.
More specifically, in Section \ref{s2},
we first prove that the measure $dm_\lz$ satisfies a doubling property with constant $2^{2\lz+1}$
and reverse doubling property with constant $\min(2, 2^{2\lz})$ (see Proposition \ref{p-redoubl} below).
We remark that the  constants are almost sharp in the sense that
Proposition \ref{p-redoubl} is false if $\min(2, 2^{2\lz})$ is replaced by 2 or
$2^{2\lz+1}$ replaced by $\max(2, 2^{2\lz})$; see Remark \ref{r-reverse doubl} below.

(ii) Kernel bound estimates  of the Riesz transforms (Lemma \ref {l-RieszCZ}  and Proposition \ref {p-lower bdd riesz}).
We recall some known upper and lower bounds as well as the H\"older's regularity of the kernel $\riz(x, y)$ of Riesz transform $\riz$
and establish a new estimate of the lower bound of $\riz(x, y)$, which plays a key role in the proof of the main result.

(iii) A new characterisation of the space ${\rm CMO}(\mathbb R_+,\, x^{2\lambda}dx)$ which is also of independent interest (Theorem \ref{t-cmo char} in
Section 3). We employ the idea of Uchiyama \cite{u78}. However, since the space $\lpz$ is not invariant under translations,
we need some new techniques and adapt the proof in \cite{u78} to our setting.

(iv) A new version of the Fr\'echet-Kolmogorov theorem in the Bessel setting (Theorem \ref{t-fre kol} in Section \ref{s4}). We
remark that Clop and Cruz \cite{cc} obtained a partial result of  Fr\'{e}chet-Kolmogorov theorem when $\omega$ belongs to
$A_p(\rn)$ with the Lebesgue measure and $p\in(1, \fz)$. However, in current setting,
the weight $x^{2\lz}$ for general $\lz\in(0, \fz)$ might not belong to $A_p(\rrp)$.

(v) Estimates on the commutator $[b, R_{\Delta_\lambda}]$ are carried out in Section \ref{s5}. By the upper and lower bounds and
the H\"older's regularity  of $\riz(x, y)$ in Section \ref{s2},
we first obtain a lemma for the upper and lower bounds of integrals of $[b, \riz]f_j$ on certain intervals, for $b\in\bmoz$ and proper function $f_j$.
Using this and
a contradiction argument in terms of the aforementioned equivalent characterization of $\cmoz$ in Section \ref{s3},
we show that if $[b,\riz]$ is compact on $\lpz$, then $b\in\cmoz$.

(vi)
By the upper bound and the H\"older's regularity  of $\riz$, together
with the Fr\'{e}chet-Kolmogorov theorem in Section \ref{s4}, we show via a density argument that if $b\in\cmoz$, then $[b,\riz]$ is compact on $\lpz$.

Here, for the necessity, we also note that Krantz and Li \cite{kl2} showed that if $T$ is a singular integral operator bounded on $L^2(\mathcal X)$ and $b\in{\rm CMO}(\mathcal X)$, then $[b,\,T]$ is compact on $L^p(\mathcal X)$ for all $p\in(1,\,\infty)$.
However, the underlying space $(\mathcal X,\,d,\,\mu)$ studied in \cite{kl2} is a space of homogeneous type which satisfies the following condition: there exist positive constants $C$ and $\epsilon_0\in(0,\,1)$ such that for all $x,\,y\in \mathcal X$ and $d(x,\,y)\leq r\leq1$,
\begin{equation}\label{mu-meas-condi}
\mu(B(x,\,r)\setminus B(y,\,r))+\mu(B(y,\,r)\setminus B(x,\,r))\leq C\lf(\frac{d(x,\,y)}{r}\r)^{\epsilon_0}.
\end{equation}

We point out that the underlying space $(\mathbb R_+,\,|\cdot|,\,dm_\lz)$ in the Bessel setting does not fall into the scope of
the space $(\mathcal X,\,d,\,\mu)$ studied by \cite{kl2}.
In fact, let $x:=N+1$, $y:=N$, $r:=1$. Then we see that
$$m_\lz(I(N+1,\,1)\setminus I(N,\,1))=\int_{N+1}^{N+2}x^{2\lz}dx\geq(N+1)^{2\lz}\rightarrow\infty,$$
as $N\rightarrow\infty$, and that
$$  \frac{d(x,\,y)}{r}=1.  $$
Hence \eqref{mu-meas-condi} is not true for our $(\mathbb R_+,\,|\cdot|,\,dm_\lz)$.

\smallskip

Throughout the paper,
we denote by $C$ and $\widetilde{C}$ {positive constants} which
are independent of the main parameters, but they may vary from line to
line. For every $p\in(1, \fz)$, $p'$ means the conjugate of $p$, i.e., $1/p'+1/p=1$.
If $f\le Cg$, we then write $f\ls g$ or $g\gs f$;
and if $f \ls g\ls f$, we  write $f\sim g.$
For any $k\in \mathbb{R}_+$ and interval $I:= I(x, r)$ for some $x$, $r\in (0, \fz)$,
$kI:=I(x, kr)$ and $I+y:=\{x+y:\,\,x\in I\}$.
For any $x,\,r\in(0,\,\infty)$, if  $x<r$, then
$$I(x,\,r)=(0,\,x+r)=I\lf(\frac{x+r}{2},\,\frac{x+r}{2}\r).$$
Thus, for a given interval $I(x, r)$, without any specific condition, we may always assume that
\begin{equation}\label{assum interval}
x\ge r.
\end{equation}

Moreover, for any $i\in\zz$, let
\begin{equation}\label{i-gener}
R_i:=\{x\in\rrp:\,\, x\le 2^i\}.
\end{equation}
If $I$ is a dyadic interval in $\rrp$, then $I=(k2^j, (k+1)2^j]$, where $k\in\zz_+,$ and $j\in\zz$.

\section{Preliminaries: Reverse doubling property  and bounds of Riesz tranforms}\label{s2}

In this section, we present some preliminary results, including the reverse doubling property of $dm_\lz$ and
upper and lower bound of Riesz transform $\riz$.
%
%
%
%
We begin with the following proposition,
which implies that $(\mathbb{R}_+, \rho, dm_\lz)$ is an RD space in \cite{hmy},
where $\rho(x,y):=|x-y|$ for all $x,\,y\in\rrp$.
\begin{prop}\label{p-redoubl}
For any $I\subset \rrp$,
\begin{equation}\label{reverse doubl}
\min\big(2, 2^{2\lz}\big)m_\lz(I)\le m_\lz(2I)\le 2^{2\lz+1}m_\lz(I).
\end{equation}
\end{prop}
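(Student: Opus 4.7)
My plan is to reduce both inequalities to elementary one-variable estimates based on the antiderivative $m_\lambda((a,b)) = (b^{p}-a^{p})/p$ with $p := 2\lambda+1 > 1$. By the convention \eqref{assum interval}, I write $I = (x_0-r, x_0+r)$ with $x_0 \ge r > 0$ and set $s := r/x_0 \in (0,1]$; then $2I = (x_0-2r, x_0+2r)$ when $s \le 1/2$ (Case A), while $2I = (0, x_0+2r)$ when $s > 1/2$ (Case B, where truncation occurs).

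For the upper bound I would use the dilation identity $m_\lambda(2I) = 2^p\, m_\lambda((2I)/2)$, a consequence of the substitution $x \mapsto x/2$ in $\int_{2I} x^{2\lambda}\,dx$; the required bound thus reduces to $m_\lambda((2I)/2) \le m_\lambda(I)$. In Case A the interval $(2I)/2 = (x_0/2-r, x_0/2+r)$ is a left-translate of $I$ (contained in $\mathbb R_+$ because $x_0 \ge 2r$), so the monotonicity of $x \mapsto x^{2\lambda}$ finishes it instantly. In Case B the reduction becomes $(x_0/2+r)^p \le (x_0+r)^p - (x_0-r)^p$; after normalising $x_0 = 1$, I would check that the defect $h(s) := (1+s)^p - (1-s)^p - (1/2+s)^p$ is non-decreasing on $[1/2,1]$ (since $(1+s)^{p-1} + (1-s)^{p-1} \ge (1/2+s)^{p-1}$) and $h(1/2) \ge 0$, the latter amounting to $3^p \ge 2^p+1$, which holds for all $p \ge 1$.

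For the lower bound, let $G(u) := (1+u)^p - (1-u)^p$, so that the ratio $R(s) := m_\lambda(2I)/m_\lambda(I)$ equals $G(2s)/G(s)$ in Case A and $(1+2s)^p/G(s)$ in Case B. In Case A two regimes arise. For $p \ge 2$ (i.e.\ $\lambda \ge 1/2$), $G''(u) = p(p-1)[(1+u)^{p-2} - (1-u)^{p-2}] \ge 0$, so $G$ is convex on $[0,1]$; since $G(0)=0$, convexity forces $G(2s) \ge 2G(s)$, giving $R(s) \ge 2$. For $1 < p < 2$ (i.e.\ $0 < \lambda < 1/2$), $G$ is concave, and the tangent-line inequalities for concave functions vanishing at the origin give $G(2s) \ge 2s\,G'(2s)$ and $G(s) \le 2ps$; combining these yields $R(s) \ge G'(2s)/p = (1+2s)^{p-1} + (1-2s)^{p-1}$, a function whose minimum on $[0,1/2]$ is $2^{p-1} = 2^{2\lambda}$ (attained at $s=1/2$, since the exponent $p-2$ is negative and the sum is therefore decreasing in $s$). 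Case B is treated by checking the endpoints $s = 1/2$ (which matches the Case A endpoint and yields $4^p/(3^p-1)$) and $s = 1$ (which yields $(3/2)^p$), verifying directly that both values dominate $\min(2, 2^{2\lambda})$ for every $p > 1$, and interpolating via a monotonicity argument for $s \mapsto (1+2s)^p/G(s)$.

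The main obstacle I expect is establishing monotonicity of $R(s)$ in Case B across the whole interval $(1/2, 1]$: direct manipulation of the logarithmic derivative of $(1+2s)^p/G(s)$ requires balancing the growth of $(1+2s)^p$ against the cancellation inside $G(s)$. The almost-sharpness of the constants $\min(2, 2^{2\lambda})$ and $2^{2\lambda+1}$ highlighted in Remark \ref{r-reverse doubl} indicates that no short-circuit estimate will bypass this case analysis.
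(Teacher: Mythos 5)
Your upper bound is complete and correct in both cases (the dilation identity $m_\lambda(2I)=2^{2\lambda+1}m_\lambda(\tfrac12(2I))$ plus translation monotonicity, and the elementary check $3^p\ge 2^p+1$ with $h$ nondecreasing), and your lower bound in the non-truncated case $s=r/x_0\le 1/2$ via convexity/concavity of $G(u)=(1+u)^p-(1-u)^p$ is also correct; this is a genuinely different and in places cleaner route than the paper's direct derivative analysis of $f_\lambda$ and $\tilde f_\lambda$.

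However, the truncated case of the lower bound ($s\in(1/2,1]$, where $2I=(0,x_0+2r)$) is a genuine gap, and the device you propose to close it fails. You plan to check the endpoints $s=1/2$ and $s=1$ and ``interpolate via a monotonicity argument for $s\mapsto R(s):=(1+2s)^p/G(s)$,'' but $R$ is not monotone on $[1/2,1]$ in general: for $p=3/2$ (i.e.\ $\lambda=1/4$) one has $R(1/2)\approx 1.91$, $R(3/4)\approx 1.80$, $R(1)=(3/2)^{3/2}\approx 1.84$, so $R$ dips strictly below both endpoint values in the interior; moreover the direction of monotonicity reverses as $p$ varies ($R$ is decreasing on $[1/2,1]$ for $p=1.2$ but increasing for $p=2$), so no single monotonicity statement can be true. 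Consequently the endpoint evaluations $4^p/(3^p-1)$ and $(3/2)^p$ do not by themselves yield $R(s)\ge \min(2,2^{2\lambda})$ on the whole interval; the interior minimum must be controlled by some other means. (The inequality itself is safe -- in the example above the dip stays well above $2^{p-1}=\sqrt2$ -- but your argument does not prove it.) Note that the paper's proof is organized precisely to avoid this obstruction: for $\lambda\in(0,1/2]$ it fixes $t\in(1/2,1)$ and shows the normalized quantity $g_\alpha(t)$ is monotone in $\alpha=2\lambda$, reducing to the explicit computation at $\alpha=1$, while for $\lambda\ge 1/2$ monotonicity in $t$ does hold because $p-1\ge1$ gives the superadditivity $(1+2t)^{p-1}\ge(1+t)^{p-1}+(1-t)^{p-1}$. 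You would need an argument of this kind (monotonicity in the parameter $\lambda$ at fixed $s$, or a direct pointwise bound valid for all $s\in(1/2,1]$) to complete Case B; as written, this step of your proof would fail.
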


\begin{proof}
Let $I:=I(x,r)\subset\rrp$ be an interval. As in the argument of \eqref{assum interval}, we can assume that $x\ge r$.
Moreover, since when $x=r$, it is easy to see that $m_\lz(2I)=(3/2)^{2\lz+1}m_\lz(I)$ and hence Proposition \ref{p-redoubl} holds.
Thus, we further assume that $x>r$.
Observe that
\begin{equation*}
m_\lz(I):=\int_{x-r}^{x+r}y^{2\lz}dy=\frac{(x+r)^{2\lz+1}-(x-r)^{2\lz+1}}{2\lz+1}
\end{equation*}
and
\begin{equation*}
m_\lz(2I)=\left\{
  \begin{array}{ll}
    \dfrac{(x+2r)^{2\lz+1}-(x-2r)^{2\lz+1}}{2\lz+1}, & \hbox{$x\ge 2r$;} \\
    \dfrac{(x+2r)^{2\lz+1}}{2\lz+1}, & \hbox{$r< x<2r$.}
  \end{array}
\right.
\end{equation*}
Let $t:=r/x$,
\begin{equation*}
f_\lz(t):=\left\{
            \begin{array}{ll}
             (1+2t)^{2\lz+1}-(1-2t)^{2\lz+1}-\min\lf(2, 2^{2\lz}\r)\lf[(1+t)^{2\lz+1}-(1-t)^{2\lz+1}\r], &\, \hbox{$t\in[0, 1/2]$,} \\
             (1+2t)^{2\lz+1}-\min\lf(2, 2^{2\lz}\r)\lf[(1+t)^{2\lz+1}-(1-t)^{2\lz+1}\r], & \hbox{$t\in(1/2,1)$;}
            \end{array}
          \right.
\end{equation*}
and
\begin{equation*}
 \tilde f_\lz(t):=\left\{
  \begin{array}{ll}
(1+2t)^{2\lz+1}-(1-2t)^{2\lz+1}-2^{2\lz+1}[(1+t)^{2\lz+1}-(1-t)^{2\lz+1}], &\, \hbox{$t\in[0, 1/2]$,} \\
    (1+2t)^{2\lz+1}-2^{2\lz+1}\lf[(1+t)^{2\lz+1}-(1-t)^{2\lz+1}\r], & \hbox{$t\in(1/2,1)$.}
  \end{array}
\right.
\end{equation*}
To show \eqref{reverse doubl}, it suffices to prove that for all $t\in(0, 1)$ and $\lz\in(0, \fz)$,
\begin{equation*}
f_\lz(t)\ge 0 \,\,{\rm and}\,\, \tilde f_\lz(t)\le0.
\end{equation*}

We first prove that $f_\lz(t)\ge 0$ by considering the following four cases:

Case (i)  $t\in(0,\,1/2]$ and $\lz\in(0,\,1/2]$. In this case,
$$f_\lz'(t)=(2\lz+1)\lf\{2(1+2t)^{2\lz}+2(1-2t)^{2\lz}-2^{2\lz}\lf[(1+t)^{2\lz}+(1-t)^{2\lz}\r]\r\}.$$
Observe that the function $g(t):=t^{2\lz}$ is a concave function of $t$ for given $\lz\in(0,\,1/2]$. By the fact that for any $a,\,b\in(0, \fz)$,
$$(a+b)^{2\lz}\leq a^{2\lz}+b^{2\lz}\leq2^{1-2\lz}(a+b)^{2\lz},$$
we see that
$$f_\lz'(t)\ge(2\lz+1)\lf[2(1+2t+1-2t)^{2\lz}-2^{2\lz}2^{1-2\lz}(1+t+1-t)^{2\lz}\r]=0,$$
 which further implies that $f_\lz(t)\ge f_\lz(0)=0$ for any $t\in(0,\,1/2]$.

Case (ii) $t\in(0,\,1/2]$ and $\lz\in(1/2,\,\infty)$. In this case,
$$f_\lz'(t)=2(2\lz+1)\lf\{(1+2t)^{2\lz}+(1-2t)^{2\lz}-\lf[(1+t)^{2\lz}+(1-t)^{2\lz}\r]\r\}.$$
By the fact that $g(t):=t^{2\lz}$ is a convex function for $\lz\in(1/2,\,\infty)$, we get that for any $t\in(0,\,1/2],$
$$(1+t)^{2\lz}=\lf[\frac{1-2t}{4}+\frac{3(1+2t)}{4}\r]^{2\lz}\leq\frac{(1-2t)^{2\lz}}{4}+\frac{3(1+2t)^{2\lz}}{4},$$
and
$$(1-t)^{2\lz}=\lf[\frac{3(1-2t)}{4}+\frac{1+2t}{4}\r]^{2\lz}\leq\frac{3(1-2t)^{2\lz}}{4}+\frac{(1+2t)^{2\lz}}{4}.$$
Combining these inequalities above, we see that $f_\lz'(t)\ge0$  and hence $f_\lz(t)\geq f_\lz(0)=0$ for any $t\in(0,\,1/2]$.

Case (iii) $t\in(1/2, 1)$ and $\lz\in(0,\,1/2]$.
In this case,
$$f_\lz(t)=(1+2t)^{2\lz+1}-2^{2\lz}\lf[(1+t)^{2\lz+1}-(1-t)^{2\lz+1}\r].$$
To show $f_\lz(t)\ge0$ for all $t\in(1/2, 1)$ and $\lz\in(0,\,1/2]$, it suffices to prove that
$$g_\alpha(t):=2^{-\alpha}-\lf[\lf(\frac{1+t}{1+2t}\r)^{\alpha+1}-\lf(\frac{1-t}{1+2t}\r)^{\alpha+1}\r]\ge0$$
for all $t\in(1/2, 1)$ and $\alpha:=2\lz\in(0,\,1]$. On the other hand, observe that for fixed $t\in(\frac12, 1)$,
$g_\alpha(t)$ is decreasing in $\alpha\in(0,\,1]$,
which implies that for any $t\in(1/2,\,1)$,
$$g_\alpha(t)\geq g_1(t)=\frac{(1-2t)^2}{2(1+2t)^2}\geq0.$$
Thus, we conclude that $f_\lz(t)\geq0$ for all $t\in(1/2,\,1)$.

Case (iv) $t\in(1/2, 1)$ and $\lz\in(1/2,\,\infty)$. In this case,
we also have $f'_\lz(t)\geq0$ for all $t\in(1/2,\,1)$.
Therefore $f_\lz(t)$ is increasing in $t\in(1/2, \,1)$, and
$$f_\lz(t)\geq f_\lz(1/2)=2^{2\lz+1}-2\lf[(3/2)^{2\lz+1}-(1/2)^{2\lz+1}\r]\geq0$$
for all $\lz\in(1/2,\,\infty)$.

Combining the four cases above, we conclude that $f_\lz(t)\geq0$ for all $\lz\in(0,\,\infty)$
and $t\in(0, \,1)$.

Now we show $\tilde f_\lz(t)\le0$ for all $t\in(0,\,1)$ and $\lz\in(0,\,\infty)$. Similarly,
when $t\in(0,\,1/2]$, we have
$$\tilde f_\lz'(t)=(2\lz+1)\lf\{2(1+2t)^{2\lz}+2(1-2t)^{2\lz}-2^{2\lz+1}\lf[(1+t)^{2\lz}+(1-t)^{2\lz}\r]\r\}<0,$$
 and $\tilde f_\lz(t)\le \tilde f_\lz(0)=0$ for all $t\in(0,\,1/2]$.
When  $t\in[1/2,\,1)$,
$$\tilde f_\lz'(t)=(2\lz+1)\lf\{2(1+2t)^{2\lz}-2^{2\lz+1}\lf[(1+t)^{2\lz}+(1-t)^{2\lz}\r]\r\}<0,$$
and $\tilde f_\lz(t)\le \tilde f_\lz(1/2)=0$ for all $t\in[1/2,\,1)$.
This finishes the proof of Proposition \ref{p-redoubl}.
\end{proof}

\begin{rem}\label{r-reverse doubl}\rm
We remark that the  constants in \eqref{reverse doubl} are almost sharp in the sense that
\eqref{reverse doubl} is false if $2^{2\lz+1}$ is replaced by $\max(2, 2^{2\lz})$  or
$\min(2, 2^{2\lz})$ replaced by 2. In fact, to see \eqref{reverse doubl} is false if
$2^{2\lz+1}$ is replaced by $\max(2, 2^{2\lz})$, it suffices to take $\lz:=1/2$ and $r:=x$ for any $x\in\rrp$.
On the other hand, to see \eqref{reverse doubl} is false if $\min(2, 2^{2\lz})$ is replaced by 2,
consider the case $\lz\in(0, 1/2]$ and $r:=x/2$ for any $x\in\rrp$. Let
$$h_{1/2}(\lz):=2^{2\lz+1}-2\lf[\lf(\frac32\r)^{2\lz+1}-\lf(\frac12\r)^{2\lz+1}\r],\,\lz\in[0, 1/2].$$
Then $h_{1/2}(0)=0=h_{1/2}(1/2)$ and $h_{1/2}(\lz)=2^{2\lz+1}\tilde h_{1/2}(t)$, where
$$\tilde h_{1/2}(\lz):=1-2\lf[\lf(\frac34\r)^{2\lz+1}-\lf(\frac14\r)^{2\lz+1}\r].$$
Observe that $\tilde h_{1/2}(\lz)\le0$ on $[0, 1/2]$. This implies that $h_{1/2}(\lz)\le 0$ for all $\lz\in(0, 1/2]$
and so
$$m_\lz(I(x, x))-2m_\lz(I(x,x/2))=\frac{r^{2\lz+1}}{2\lz+1}h_{\frac12}(\lz)\le0.$$
\end{rem}

We now recall some known upper and lower bounds of the kernel $\riz(y,z)$ of $\riz$.
The following estimates can be found in, for example, \cite{bfbmt,dlwy}.
\begin{lem}\label{l-RieszCZ}
The kernel $\riz(y,z)$ satisfies the following  estimates:
\begin{itemize}
  \item [{\rm i)}] There exists a positive constant $C$ such that for any $y,z\in\mathbb{R}_+$ with $y\not=z$,
  \begin{equation}\label{cz kernel condition-1}
  |\riz(y,z)|\le C \frac1{m_\lz(I(y, |y-z|))} .
  \end{equation}
  \item [{\rm ii)}]There exists a positive constant $\wz C$ such that  for any $y,\,y_0,\, z\in \mathbb{R}_+$ with $|y_0-z|<|y_0-y|/2$,
  \begin{eqnarray}\label{cz kernel condition-2}
   &&|\riz(y, y_0)-\riz(y,z)|+ |\riz(y_0, y)-\riz(z,y)|\nonumber\\
   &&\quad\le \wz C \frac{|y_0-z|}{|y_0-y|}\frac1{m_\lz(I(y, |y_0-y|))}.
  \end{eqnarray}
%
  \item [{\rm iii)}]There exist $K_1\in(0, 1)$ small enough and a positive constant $C_{K_1,\,\lz}$ such that
  for any $y,\,z\in\mathbb{R}_+$ with $z<K_1y$,
  \begin{equation*}
  \riz(y, z)\le -C_{K_1,\,\lz}\frac1{y^{2\lz+1}}.
  \end{equation*}
  \item [{\rm iv)}] There exist $K_2\in(1/2,1)$ such that $1-K_2$ small enough and a positive constant $C_{K_2,\lz}$
  such that for any $y,\,z\in\mathbb{R}_+$ with $z/y\in(K_2, 1)$,
  \begin{equation*}\label{appro estimate of riesz kernel}
 \lf |\riz(y,z)+\frac1\pi\frac1{y^\lz z^\lz}\frac1{y-z}\r|\le C_{K_2,\,\lz}\frac1{y^{2\lz+1}}\lf(\log_+\frac{\sqrt{yz}}{|y-z|}+1\r).
  \end{equation*}
\end{itemize}
\end{lem}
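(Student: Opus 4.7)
The plan is to derive all four bounds from a single explicit integral representation of the Riesz kernel in the Bessel setting. Using the subordination formula $\Delta_\lz^{-1/2}=\pi^{-1/2}\int_0^\fz e^{-t\Delta_\lz}t^{-1/2}\,dt$, together with the Weinstein heat kernel for $\Delta_\lz$ (a Gaussian times a modified Bessel function of order $\lz-1/2$, which has a classical integral representation in $\tz\in(0,\pi)$), and then differentiating in $y$, one arrives at a representation of the form
\[
\riz(y,z) = -c_\lz\int_0^\pi \frac{(y-z\cos\tz)(\sin\tz)^{2\lz}}{(y^2-2yz\cos\tz+z^2)^{\lz+3/2}}\,d\tz,
\]
which is the common starting point for all four parts. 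The basic analytic device is the elementary lower bound $y^2-2yz\cos\tz+z^2\gs (y-z)^2+yz\,\tz^2$ on $\tz\in(0,\pi/2)$ (with an obvious variant on $(\pi/2,\pi)$) and the threshold $\tz_0:=|y-z|/\sqrt{yz}$ at which the two summands balance.

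For (i), I split the $\tz$-integral at $\tz_0$ and estimate each piece directly; combined with the observation (under the convention $y\ge|y-z|$ of \eqref{assum interval}) that $m_\lz(I(y,|y-z|))\sim|y-z|(y+z)^{2\lz}$, the two contributions together produce the stated size bound. For (ii), I differentiate the kernel in $z$ (equivalently in $y$) under the integral sign and apply the same splitting to deduce $|\partial_z\riz(y,z)|\ls 1/[|y-z|\,m_\lz(I(y,|y-z|))]$; the Hölder-type inequality for $|\riz(y,y_0)-\riz(y,z)|$, and symmetrically for the $y$-variable, then follows by the mean value theorem on the segment between $y_0$ and $z$, which stays in the regime where the gradient bound applies because $|y_0-z|<|y_0-y|/2$.

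For (iii), when $z<K_1y$ with $K_1$ small enough, the denominator satisfies $y^2-2yz\cos\tz+z^2\sim y^2$ and the numerator $y-z\cos\tz\ge(1-K_1)y>0$ uniformly in $\tz\in(0,\pi)$. Hence the integrand is comparable to $(\sin\tz)^{2\lz}/y^{2\lz+2}$ with a definite sign, and integration against the finite measure $(\sin\tz)^{2\lz}\,d\tz$ on $(0,\pi)$ produces a strictly positive quantity of size $1/y^{2\lz+1}$. The explicit minus sign carried by the kernel then gives the one-sided bound $\riz(y,z)\le -C_{K_1,\lz}/y^{2\lz+1}$, as claimed.

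For (iv), the most delicate part, the task is to extract from the same integral its principal singular part $-\pi^{-1}y^{-\lz}z^{-\lz}(y-z)^{-1}$ in the regime $z/y\in(K_2,1)$. The plan is to expand $1-\cos\tz=\tz^2/2+O(\tz^4)$ and $\sin\tz=\tz+O(\tz^3)$, rescale $\tz=s\,|y-z|/\sqrt{yz}$ so that the dominant contribution is collected on $|s|\ls 1$, and identify the leading integral with a Hilbert-type kernel whose constant is exactly $1/\pi$. The logarithm $\log_+(\sqrt{yz}/|y-z|)$ emerges naturally from the intermediate range $\tz_0\ls\tz\ls 1$ after the rescaling. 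Here the main obstacle is bookkeeping: one must show that all error terms in the expansions of numerator and denominator, and the remainder coming from $\tz\ge 1$, contribute at most $1/y^{2\lz+1}$ times the asserted logarithm. This bookkeeping uses repeatedly the equivalence $m_\lz(I(y,|y-z|))\sim |y-z|\,y^{2\lz}$ valid when $y\sim z$, together with the reverse doubling property from Proposition~\ref{p-redoubl} to pass between $y$ and $\sqrt{yz}$.
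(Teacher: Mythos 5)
First, a framing point: the paper does not prove Lemma \ref{l-RieszCZ} at all; it quotes these estimates from \cite{bfbmt,dlwy}. So your attempt is a reconstruction, and its intended route (work from the explicit angular integral for the kernel and split at $\tz_0=|y-z|/\sqrt{yz}$) is indeed the standard one in those references. However, there is a concrete flaw at the foundation: your integral representation is wrong. The kernel is
\begin{equation*}
\riz(y,z)=-\frac{2\lz}{\pi}\int_0^\pi\frac{(y-z\cos\tz)(\sin\tz)^{2\lz-1}}{(y^2+z^2-2yz\cos\tz)^{\lz+1}}\,d\tz,
\end{equation*}
which the paper itself records as \eqref{riesz kernel} and uses to prove Proposition \ref{p-lower bdd riesz}. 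Your version, with $(\sin\tz)^{2\lz}$ and exponent $\lz+3/2$, is homogeneous of degree $-(2\lz+2)$ in $(y,z)$, while every estimate in the lemma requires degree $-(2\lz+1)$ (since $1/m_\lz(I(y,|y-z|))\sim 1/[|y-z|(y+z)^{2\lz}]$). Your own subsequent claims silently use the correct scaling: in (iii) the integrand of your formula has size $(\sin\tz)^{2\lz}/y^{2\lz+2}$ and integrates to $1/y^{2\lz+2}$, not the asserted $1/y^{2\lz+1}$; in (iv) a kernel of homogeneity $-(2\lz+2)$ cannot have $-\frac1\pi(yz)^{-\lz}(y-z)^{-1}$ as its principal part. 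As written, the proposal therefore does not prove the lemma.

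If one replaces your formula by \eqref{riesz kernel}, the outline for (i)--(iii) is sound and standard: the lower bound $y^2+z^2-2yz\cos\tz\gs(y-z)^2+yz\tz^2$, the split at $\tz_0$, differentiation under the integral sign for (ii) followed by the mean value theorem (legitimate because $|y_0-z|<|y_0-y|/2$ keeps intermediate points at distance comparable to $|y_0-y|$ from $y$), and the sign argument for (iii) with $\int_0^\pi(\sin\tz)^{2\lz-1}\,d\tz<\fz$. But part (iv) — the delicate estimate the paper actually leans on through Remark \ref{l-RieszCZ-1} and Proposition \ref{p-lower bdd riesz} — is only announced: you do not carry out the rescaling $\tz=s|y-z|/\sqrt{yz}$, the identification of the constant (which comes from $\frac{2\lz}{\pi}\int_0^\fz s^{2\lz-1}(1+s^2)^{-\lz-1}\,ds=\frac1\pi$), the logarithmic contribution of the intermediate range $\tz_0\ls\tz\ls1$ coming from the $z(1-\cos\tz)$ part of the numerator, or the control of the remaining error terms by $y^{-2\lz-1}(\log_+(\sqrt{yz}/|y-z|)+1)$. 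So (iv) is asserted rather than proved, and with the representation you wrote down it could not even be true; correcting the representation and executing that bookkeeping (or simply citing \cite{bfbmt}, as the paper does) is what is missing.
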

\begin{rem}\label{l-RieszCZ-1}\rm
We mention that by Lemma \ref{l-RieszCZ} iv), there exists $\tilde K_2\in(K_2,\,1)$ such that for any $y,\,z\in\mathbb R_+$ with $\tilde K_2<z/y<1$,
\begin{equation}\label{up-bdd-riesz-kernel}
-\riz(y, z)\geq \frac1{2\pi y^{\lz}z^{\lz}(y-z)}.
\end{equation}
In fact, from Lemma \ref{l-RieszCZ} iv), it follows that for any $y,\,z\in\mathbb{R}_+$ with $K_2 <z/y<1$,
\begin{equation*}
-\riz(y, z)\geq \frac{1}{\pi}\frac1{y^{\lz}z^{\lz}}\frac{1}{y-z}-C_{K_2,\,\lz}\frac{1}{y^{2\lz+1}}\lf(\log_+\frac{\sqrt{yz}}{|y-z|}+1\r).
\end{equation*}
To show \eqref{up-bdd-riesz-kernel}, it suffices to prove that there exists $\tilde K_2\in (K_2,\,1)$ such that for all $y,\,z\in\mathbb R_+$ with $z/y\in(\tilde K_2,\,1)$,
$$C_{K_2,\,\lz}\frac{1}{y^{2\lz+1}}\lf(\log_+\frac{\sqrt{yz}}{|y-z|}+1\r)\leq\frac{1}{2\pi}\frac{1}{y^\lz z^\lz}\frac{1}{y-z}.$$
Equivalently, we only need to show that
\begin{equation*}
\lf(\frac{z}{y}\r)^{\lz}\frac{y-z}{y}\lf(\log_+\frac{\sqrt{yz}}{|y-z|}+1\r)\leq\frac{1}{2\pi C_{K_2,\,\lz}}.
\end{equation*}
Note that
$$\lf(\frac{z}{y}\r)^{\lz}\frac{y-z}{y}\lf(\log_+\frac{\sqrt{yz}}{|y-z|}+1\r)
=\lf(\frac{z}{y}\r)^{\lz}\lf(1-\frac{z}{y}\r)\lf(\log_+\frac{\sqrt{z/y}}{|1-z/y|}+1\r)\rightarrow0,$$
as $z/y\rightarrow1^-.$ This implies the existence of $\tilde K_2$, which shows \eqref{up-bdd-riesz-kernel}.
\end{rem}
As a consequence of Lemma \ref{l-RieszCZ} iii) and Remark \ref{l-RieszCZ-1} above, we further establish a new version of lower bound for $\riz(y,z)$
for all $z<y$, which plays a key role in the proof of Theorem \ref{t-riesz compact}.

\begin{prop}\label{p-lower bdd riesz}
 There exists a positive constant $C_0$ such that for any $y,\,z\in\rrp$ with $z<y$,
  $$\riz(y, z)\le -C_0\frac1{m_\lz(I(y, y-z))}.$$
 \end{prop}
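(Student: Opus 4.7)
The plan is to split the range $0<z<y$ according to the ratio $t:=z/y\in(0,1)$, using the cut-offs $K_1$ from Lemma \ref{l-RieszCZ} iii) and $\tilde K_2$ from Remark \ref{l-RieszCZ-1}. In each of the three resulting regimes I shall verify
\begin{equation*}
-\riz(y,z)\cdot m_\lz(I(y,y-z))\ge C_0
\end{equation*}
by combining the pointwise lower bound on $-\riz(y,z)$ available in that regime with a lower bound on $m_\lz(I(y,y-z))$ that exactly cancels the $y$-dependence.

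In the \emph{far regime} $0<t\le K_1$, Lemma \ref{l-RieszCZ} iii) gives $-\riz(y,z)\ge C_{K_1,\lz}y^{-(2\lz+1)}$. Since $z\le K_1 y\le y/2$, the interval $I(y,y-z)=(z,2y-z)$ contains $(y/2,y)$, on which $x^{2\lz}\ge(y/2)^{2\lz}$; hence $m_\lz(I(y,y-z))\gs y^{2\lz+1}$ and the product is $\gs 1$. In the \emph{near regime} $\tilde K_2\le t<1$, estimate \eqref{up-bdd-riesz-kernel} gives $-\riz(y,z)\ge (2\pi y^\lz z^\lz(y-z))^{-1}$; since $z\ge\tilde K_2 y$, the integrand $x^{2\lz}$ on $I(y,y-z)$ satisfies $x^{2\lz}\ge(\tilde K_2 y)^{2\lz}$, so $m_\lz(I(y,y-z))\ge 2(y-z)(\tilde K_2 y)^{2\lz}$, and the product becomes $(\tilde K_2)^{2\lz}(y/z)^\lz/\pi\ge\tilde K_2^{2\lz}/\pi$.

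The \emph{gap regime} $K_1<t<\tilde K_2$ is the genuinely new case. Here $y-z\sim y$, so $m_\lz(I(y,y-z))\gs y^{2\lz+1}$ is immediate, and what remains is to show $-\riz(y,z)\gs y^{-(2\lz+1)}$ uniformly in $t$. For this I exploit the scaling homogeneity $\riz(\tau y,\tau z)=\tau^{-(2\lz+1)}\riz(y,z)$ valid for every $\tau>0$, which follows at once from the $2$-homogeneity of $\Delta_\lz$ under $x\mapsto\tau x$ together with $dm_\lz(\tau x)=\tau^{2\lz+1}\,dm_\lz(x)$. This reduces the task to proving $-\riz(1,t)\ge c>0$ on the compact set $[K_1,\tilde K_2]$. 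I invoke the classical explicit integral representation of the Bessel Riesz kernel (see \cite{bfbmt,ms}), from which one reads off both that $\riz(1,t)<0$ for every $t\in(0,1)$ and that $t\mapsto\riz(1,t)$ is continuous on $(0,1)$; compactness of $[K_1,\tilde K_2]$ then delivers the desired uniform lower bound.

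The main obstacle is precisely the gap regime. The available bounds in Lemma \ref{l-RieszCZ} iii) and Remark \ref{l-RieszCZ-1} are purely asymptotic: the former demands $K_1$ ``small enough'' and the latter $\tilde K_2$ close to $1$, so a priori there is no overlap between the two ranges. Bridging them requires global qualitative information about $\riz(1,\cdot)$, specifically its sign and its continuity on the diagonal complement, and the most economical route is to read these off from the classical kernel formula and then invoke scaling invariance plus compactness to harvest the uniform bound. Once this piece is in place, the three regimes fit together to give the claimed inequality with a universal constant $C_0$.
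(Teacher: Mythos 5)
Your proposal is correct and follows essentially the same route as the paper: the identical three-way split by the ratio $z/y$ using $K_1$ and $\tilde K_2$, the reduction $m_\lz(I(y,y-z))\gtrsim y^{2\lz}(y-z)$, and, in the intermediate range, the explicit integral formula for $\riz(y,z)$ with the factor $y^{-(2\lz+1)}$ pulled out (your scaling identity is exactly the paper's factorization $\riz(y,sy)=-\tfrac{2\lz}{\pi}y^{-(2\lz+1)}{\rm I}(s)$). The only difference is the endgame on $[K_1,\tilde K_2]$: the paper bounds the $\theta$-integral below by an explicit positive constant via elementary estimates, while you obtain the uniform bound softly from strict negativity, continuity in $t$ (which does need a one-line dominated-convergence justification, since $(\sin\theta)^{2\lz-1}$ is singular but integrable when $\lz<1/2$ and the denominator is bounded below on compact $t$-sets), and compactness; your version is shorter but non-quantitative, the paper's is self-contained and gives an explicit constant. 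Both are valid proofs of the proposition.
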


\begin{proof} Since $y>z$ and $y>y-z$, we first see that $m_\lz(I(y,\,y-z))\thicksim y^{2\lz}(y-z)$, thus we only need to show
$$-\riz(y, z)\gs \frac1{y^{2\lz}(y-z)}.$$
Recall that
\begin{equation}\label{riesz kernel}
\riz(y,z)=-\dfrac{2\lz}{\pi}\dint_0^\pi\dfrac{(y-z\cos\theta)(\sin\theta)^{2\lz-1}}
{(y^2+z^2-2yz\cos\theta)^{\lz+1}}\,d\theta;
\end{equation}
see, for example, \cite{bdt}.
For any fixed $y, z\in \mathbb{R}_+$ with $y>z$, write $z=sy$. Then
$s\in (0, 1)$. If $s<K_1$, where $K_1$ is as in iii) of Lemma \ref{l-RieszCZ},
 then by iii) of Lemma \ref{l-RieszCZ}, we see that
$$-\riz(y, z)\gs \frac1{y^{2\lz+1}}\gs \frac1{y^{2\lz}(y-z)}.$$
On the other hand, for $\tilde K_2$ as in Remark \ref{l-RieszCZ-1}, and any $y,\,z\in\mathbb R_+$ with $\tilde K_2<y/z<1$,
$$-\riz(y, z)\gs \frac1{y^{\lz}z^{\lz}(y-z)}.$$
If $z=sy$ and $s\in(\tilde K_2,1)$, then
$$-\riz(y, z)\gs \frac1{y^{2\lz}(y-z)}.$$
Thus, it remains to consider $s\in[K_1, \tilde K_2]$. By \eqref{riesz kernel}, we write
\begin{equation*}
\riz(y,sy)=\frac{-2\lz}{\pi}\frac1{y^{2\lz+1}}\dint_0^\pi\dfrac{(1-s\cos\theta)(\sin\theta)^{2\lz-1}}
{(1+s^2-2s\cos\theta)^{\lz+1}}\,d\theta=:\frac{-2\lz}{\pi}\frac1{y^{2\lz+1}}{\rm I}.
\end{equation*}
Since $s\in[K_1, \tilde K_2]$ and $\frac2\pi\theta\le\sin\theta\le\theta$ for any $\theta\in(0, \pi/2)$, we see that
$1-\tilde K_2\le 1-s<1$ and
\begin{eqnarray*}
{\rm I}&=&\dint_0^\pi\dfrac{[(1-s)+s(1-\cos\theta)]}{(1+s^2-2s\cos\theta)^{\lz+1}}(\sin\theta)^{2\lz-1}\,d\theta\\
&\ge&\dint_0^{\frac\pi2}\dfrac{1-s}{[(1-s)^2+2s(1-\cos\theta)]^{\lz+1}}(\sin\theta)^{2\lz-1}\,d\theta\\
&\gs&(1-s)\int_0^{\frac\pi2}\frac{\theta^{2\lz-1}}{[(1-s)^2+4s(\sin\frac\theta2)^2)]^{\lz+1}}\,d\theta\\
&\gs&(1-s)\int_0^{\frac\pi2}\frac{\theta^{2\lz-1}}{[(1-s)^2+s\theta^2]^{\lz+1}}\,d\theta\\
&\gs&\frac1{(1-s)^{2\lz+1}}\int_0^{\frac\pi2}\frac{\theta^{2\lz-1}}{[1+(\frac{\sqrt s}{1-s}\theta)^2]^{\lz+1}}\,d\theta\\
&\gs&\int_0^{\frac\pi2\frac{\sqrt{K_1}}{1-K_1}}\frac{\beta^{2\lz-1}}{(1+\bz^2)^{\lz+1}}\,d\bz\gs1.
\end{eqnarray*}
Thus, by the inequality above and the fact that $(1-\tilde K_2)y\le y-z<y$, we conclude that
$$-\riz(y, z)\gs \frac1{y^{2\lz+1}}{\rm I}\gs \frac1{y^{2\lz}(y-z)},$$
and finish the proof of Proposition \ref{p-lower bdd riesz}.
%
\end{proof}

\section{An equivalent characterization of $\cmoz$}\label{s3}

In this section, we establish an equivalent characterization of $\cmoz$,
which is of independent interest. See also \cite{u78}.

\begin{thm}\label{t-cmo char}
Let $f\in\bmoz$. Then $f\in\cmoz$ if and only if $f$ satisfies the following three conditions:
\begin{itemize}
  \item [{\rm(i)}] $$\lim_{a\to0^+}\sup_{m_\lz(I)=a}M_\lz(f, I)=0,$$
  \item [{\rm(ii)}]$$\lim_{a\to\fz}\sup_{m_\lz(I)=a}M_\lz(f, I)=0,$$
  and
  \item [{\rm (iii)}]$$\lim_{R\to\fz}\sup_{I\subset[R,\,\infty)}M_\lz(f, I)=0.$$
\end{itemize}
\end{thm}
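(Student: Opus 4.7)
My plan is to prove both directions separately: necessity by a direct approximation argument, and sufficiency by a construction in the spirit of Uchiyama's approach in \cite{u78}, adapted to the Bessel setting.

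For the necessity, given $f \in \cmoz$ and $\ez > 0$, I pick $g \in \cd$ with $\|f - g\|_{\bmoz} < \ez$; since $M_\lz(f, I) \le M_\lz(f - g, I) + M_\lz(g, I) \le \ez + M_\lz(g, I)$, it suffices to verify (i)--(iii) for $g$. Writing $\supp g \subset [\dz_0, N] \subset (0, \fz)$: condition (i) follows from uniform continuity of $g$ together with the doubling property in Proposition~\ref{p-redoubl}, which forces the diameter of $I$ to zero as $m_\lz(I) \to 0$; condition (iii) is immediate for $R > N$, since then $g \equiv 0$ on every $I \subset [R, \fz)$; condition (ii) follows from the simple estimate $\int_I |g - g_{I, \lz}| \, dm_\lz \le 2 \|g\|_\fz m_\lz(I \cap [\dz_0, N])$ divided by $m_\lz(I) \to \fz$.

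For the sufficiency, given $\ez > 0$ and $f$ satisfying (i)--(iii), I fix $0 < a < A$ and $0 < \dz_0 < R$ so that $M_\lz(f, I) < \ez$ whenever $m_\lz(I) \le a$, $m_\lz(I) \ge A$, or $I \subset [R, \fz)$, and so that $m_\lz(I(0, 2\dz_0)) \le a$. I then construct $g \in \cd$ by smoothly cutting $f$ off near $0$ and near $\fz$ and mollifying: choose $\eta \in \cd$ with $\eta \equiv 1$ on $[\dz_0, R]$ and $\supp \eta \subset [\dz_0/2, 2R]$, a standard nonnegative mollifier $\psi$ with $\int \psi = 1$ supported in $[-1, 1]$, and a small $\dz > 0$ with $m_\lz(I(x, \dz)) \le a$ uniformly for $x \in \supp \eta$; set $\psi_\dz := \dz^{-1} \psi(\cdot/\dz)$ and $g := \eta \cdot (\psi_\dz * f)$. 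I then estimate $M_\lz(f - g, I)$ case by case. When $I \subset [R, \fz)$ I invoke (iii) together with vanishing of $g$ outside $[0, 2R]$; when $m_\lz(I) \ge A$ I invoke (ii) for $f$ together with a direct bound on the bounded compactly supported $g$; when $m_\lz(I) \le a$ and $I \cap [\dz_0/2, 2R] \ne \emptyset$ the mollification error is dominated by (i) applied to a slightly enlarged interval $I \cup (I + [-\dz, \dz])$; when $I \subset [0, \dz_0]$ the oscillation of $f$ is itself controlled by (i); and the remaining intermediate-scale intervals form a bounded family handled by finitely many direct estimates. Collecting the cases yields $\|f - g\|_{\bmoz} \ls \ez$.

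The main obstacle is reconciling the translation-based mollification with the non-translation-invariant measure $dm_\lz$; in particular, a BMO function on $(\rrp, dm_\lz)$ need not be locally $dx$-integrable near $x = 0$, so the cutoff at $\dz_0$ is essential and must be placed using (i) rather than arbitrarily. The doubling and reverse doubling of $dm_\lz$ in Proposition~\ref{p-redoubl} play the role that translation invariance plays in Uchiyama's $\rn$ argument: they give a clean quantitative link between $m_\lz(I)$ and the center/radius of $I$, making the case analysis uniform across all positions of $I$. The delicate estimates are those for intervals whose position and size straddle the thresholds separating the three regimes encoded in (i), (ii), (iii), where one must show that the same smooth approximant $g$ works simultaneously in all regimes.
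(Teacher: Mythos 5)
Your necessity argument is fine and coincides with the paper's. The sufficiency direction, however, has a genuine gap: the approximant $g:=\eta\cdot(\psi_\dz\ast f)$ forces $g$ to vanish near $0$ and near $\infty$, and the resulting error is proportional to the local \emph{size} of $f$ near the cutoff, which conditions (i)--(iii) do not control, since they only constrain oscillations. The simplest counterexample is $f\equiv K$ with $K$ large: it satisfies (i)--(iii) trivially (all oscillations vanish) and lies in $\cmoz$ (constants are at $\bmoz$-distance $0$ from $\cd$), yet $f-g=K\bigl(1-\eta\cdot(\psi_\dz\ast 1)\bigr)=K(1-\eta)$, and any interval $I$ containing a transition region of $\eta$ (near $x\sim R$ or near $x\sim\dz_0$) gives $M_\lz(f-g,I)\sim cK$, which cannot be made $\ls\ez$ uniformly in $f$. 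The same defect sits inside your case analysis: for $I$ meeting $[R,2R]$ you cannot ``invoke (iii) together with vanishing of $g$,'' because on such $I$ the difference $f-g$ contains the term $(1-\eta)(\psi_\dz\ast f)$, whose oscillation is of the order of the average of $f$ there, not of its mean oscillation; the same happens for intervals straddling $\dz_0/2$. The purely local part of your plan (a Sarason-type estimate showing, via (i), that $f-\psi_\dz\ast f$ is small in $\bmoz$ on the region where $\eta\equiv1$) is sound; it is the multiplication by the cutoff that breaks.

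The paper avoids precisely this by never truncating $f$ to zero: its approximant $g_\ez$ is a piecewise average of $f$ over a position-dependent fine grid inside the large ball $R_{m_\ez}$, and outside it is set equal to the constant $f_{R_{m_\ez}\setminus R_{m_\ez-1},\,\lz}$ rather than $0$. Conditions (ii) and (iii), through the chain estimate \eqref{m epsi minus}, guarantee that all grid averages in the outer annulus differ from this constant by at most $\ez$, so the matching creates no large jump; then, since the $\bmoz$ seminorm is blind to additive constants, subtracting $f_{R_{m_\ez}\setminus R_{m_\ez-1},\,\lz}$ yields a compactly supported, slowly varying function that is uniformly approximated by a continuous compactly supported function and mollified into $\cd$. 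If you want to keep your mollify-and-cut scheme, you must cut off $\psi_\dz\ast f$ \emph{minus a suitable constant} (e.g.\ the average of $f$ over the outer transition region) and use (ii) and (iii) to show that a single constant works across the whole exterior (and argue separately near $x=0$); as written, the proposal does not establish $\|f-g\|_\bmoz\ls\ez$.
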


\begin{proof}
Assume that $f\in \cmoz$.
If $f\in\cd$, then (i)-(iii) hold. In fact, (i) holds for $f$ since $f$ is uniformly continuous,
(ii) holds since $f\in\loz$, and (iii) holds
by the fact that $f$ is compactly supported.
If $f\in\cmoz\setminus\cd$, then for any given $\ez>0$, there exists $f_\ez\in\cd$ satisfying
(i)-(iii) and $\|f-f_\ez\|_\bmoz<\ez$. By the triangle inequality of $\bmoz$ norm, we see that (i)-(iii) hold for $f$.

Now we prove the converse. To this end, we assume that $f$ satisfies (i)-(iii).
  To prove that $f\in\cmoz$, it suffices to show
that there exists a positive constant $C_1$ depending only on $\lz$ such that,
for any $\ez\in(0,1)$, there exists $g_\ez\in\bmoz$ satisfying that
\begin{equation}\label{appro cmo-1}
\inf_{h\in\cd}\|g_\ez-h\|_\bmoz<C_1\ez
\end{equation}
and
\begin{equation}\label{appro com-2}
\|g_\ez-f\|_\bmoz<C_1\ez.
\end{equation}

We prove \eqref{appro cmo-1} and \eqref{appro com-2} by the following two steps.

{\bf Step I} We define an auxiliary function $\tilde g_\ez$ via a set of dyadic intervals $\mathcal I$ of $\rrp$.
In fact, by (i) and (ii), there exist $i_\ez, k_\ez\in\nn$ such that
\begin{equation}\label{i epsilon}
\sup\lf\{\mofi:\,\,m_\lz(I)\le 2^{-i_\ez+1}\r\}<\ez
\end{equation}
and
\begin{equation}\label{k epsilon}
\sup\lf\{\mofi:\,\,m_\lz(I)\ge 2^{k_\ez}\r\}<\ez.
\end{equation}
By (iii), there exists an integer $j_\ez>k_\ez$ such that
\begin{equation}\label{j epsilon}
\sup\lf\{\mofi:\,\, I\cap R_{j_\ez}=\emptyset\r\}<\ez,
\end{equation}
where $R_{j_\ez}$ is as in \eqref{i-gener}.

For the above $j_\ez$, we consider the dyadic intervals $R_{j_\ez}:=(0,2^{j_\ez}]$, $R_m\setminus R_{m-1}:=( 2^{m-1}, 2^m]$, ${m>j_\ez}$.
Next for $k=1,\ldots, 2^{j_\ez+i_\ez+2+\lfloor2\lz(j_\ez+1)\rfloor}$, we denote by
$$ I_k^{j_\ez}:= \lf((k-1)2^{-i_\ez-2-\lfloor2\lz(j_\ez+1)\rfloor}, k 2^{-i_\ez-2-\lfloor2\lz(j_\ez+1)\rfloor}\r]$$
the descendants of $R_{j_\ez}$. Here for any $\az\in\rr$, $\lfloor \az\rfloor$
means the largest integer $k$ such that $k\le \az$. And similarly, for each $m>j_\ez$ and $k=1,\ldots, 2^{j_\ez+i_\ez+1+\lfloor2\lz(j_\ez+1)\rfloor}$,
denote by
$$ I_k^{m}:= \lf(2^{m-1}+(k-1)2^{-i_\ez-2-\lfloor2\lz(j_\ez+1)\rfloor + m-j_\ez}, 2^{m-1}+k 2^{-i_\ez-2-\lfloor2\lz(j_\ez+1)\rfloor + m-j_\ez}\r]$$
the descendants of $R_{m}\setminus R_{m-1}$.

Then we list these dyadic descendants in order as follows:
$$\mathcal I:=\lf\{I^{j_\ez}_1,\ldots,\,I^{j_\ez}_{2^{j_\ez+i_\ez+2+\lfloor2\lz(j_\ez+1)\rfloor}},
\,I^{j_\ez+1}_1,\ldots, I^{j_\ez+1}_{2^{j_\ez+i_\ez+1+\lfloor2\lz(j_\ez+1)\rfloor}},
I^{j_\ez+2}_1,\,\ldots\r\}.$$

For each $x\in\rrp$, we define  $I_x$ as follows:  if $I\in\mathcal{I}$ and $x\in I$, then $I_x := I$. Observe that for each $x\in\mathbb R_+$, such $I_x$ exists and is unique.


We claim that
\begin{itemize}
  \item [(a)] Every dyadic interval $I$ in
$$\lf\{I^{j_\ez}_1,\cdots,\,I^{j_\ez}_{2^{j_\ez+i_\ez+2+\lfloor2\lz(j_\ez+1)\rfloor}},\,I^{j_\ez+1}_1,\,I^{j_\ez+1}_2\r\}$$
satisfies that $m_\lz(I)\le 2^{-i_\ez}$.

  \item [(b)] For any $m_\ez> j_\ez$ and $x\in R_{m_\ez}\setminus R_{m_\ez-1},$
\begin{equation}\label{ix length}
\lf|2^{j_\ez+\lfloor2\lz(j_\ez+1)\rfloor+2+i_\ez} I_x\r|= 2^{m_\ez}=|R_{m_\ez}|,
\end{equation}
\begin{equation}\label{ix inclusion}
2^{j_\ez+\lfloor2\lz(j_\ez+1)\rfloor+2+i_\ez} I_x\subset R_{m_\ez+1},
\end{equation}
moreover, if  $2\lz(m_\ez-j_\ez-2)\ge2$, then
\begin{equation}\label{ix meas lower bdd}
m_\lz(I_x)\ge 2^{m_\ez-i_\ez-j_\ez}.
\end{equation}
\end{itemize}

In fact, since $m_\lz(I_x)$ is non-decreasing with respect to $x$, to show (a), we only need to show that $m_\lz(I^{j_\ez+1}_2)\le 2^{-i_\ez}$. Observe that
$$I_2^{j_\ez+1}:=\lf(2^{j_\ez}+2^{-i_\ez-1-\lfloor2\lz(j_\ez+1)\rfloor}, 2^{j_\ez}+2^{-i_\ez-\lfloor2\lz(j_\ez+1)\rfloor}\r].$$
From this and the mean value theorem, it follows that there exists $\xi\in I_2^{j_\ez+1}$ such that
\begin{eqnarray}\label{ix-upper bdd}
m_\lz\lf(I_2^{j_\ez+1}\r)&=&\xi^{2\lz}2^{-i_\ez-1-\lfloor2\lz(j_\ez+1)\rfloor}\noz\\
&\le&2^{2\lz j_\ez}\lf[1+2^{-i_\ez-(2\lz+1)j_\ez-2\lz+1}\r]^{2\lz} 2^{-i_\ez-2\lz(j_\ez+1)}\noz\\
&=&\lf[\frac{1+2^{-i_\ez-(2\lz+1)j_\ez-2\lz+1}}2\r]^{2\lz} 2^{-i_\ez}\le 2^{-i_\ez}.
\end{eqnarray}
This implies (a) holds.

To show (b), we first observe that \eqref{ix length} is obvious.
Moreover,  since \eqref{ix inclusion} holds for
$$I^{m_\ez}_{2^{j_\ez+1+\lfloor2\lz(j_\ez+1)\rfloor+i_\ez}}:=\lf(2^{m_\ez}-2^{-i_\ez-2-\lfloor2\lz(j_\ez+1)\rfloor+m_\ez-j_\ez}, 2^{m_\ez}\r],$$
the last interval of $\mathcal I$ included in $R_{m_\ez}\setminus R_{m_\ez-1}$,
we also have that \eqref{ix inclusion} holds for any $I_x$ with $x\in R_{m_\ez}\setminus R_{m_\ez-1}.$

Finally, by the fact that $m(I_x)$ is non-decreasing in $x$, it suffices to
show \eqref{ix meas lower bdd} holds for $I_1^{m_\ez}$, the first dyadic interval of $\mathcal I$ included in
$ R_{m_\ez}\setminus R_{m_\ez-1}$. Observe that
$$I_1^{m_\ez}:=\lf(2^{m_\ez-1}, 2^{m_\ez-1}+2^{-i_\ez-2-\lfloor2\lz(j_\ez+1)\rfloor+m_\ez-j_\ez}\r]$$
and there exists $\xi\in I_1^{m_\ez}$ such that
$$m_\lz\lf(I_1^{m_\ez}\r)=\xi^{2\lz}2^{-i_\ez-2-\lfloor2\lz(j_\ez+1)\rfloor+m_\ez-j_\ez}
\ge 2^{2\lz(m_\ez-j_\ez-2)-2}2^{m_\ez-i_\ez-j_\ez}\ge 2^{m_\ez-i_\ez-j_\ez}$$
provided $2\lz(m_\ez-j_\ez-2)-2\ge0$. This implies \eqref{ix meas lower bdd}.

 Now for each $x\in\rrp$, let $\tilde g_\ez(x):=f_{I_x,\,\lz}$,
where $f_{I_x,\,\lz}$ is defined as in \eqref{average}. Then from
(ii), there exists an integer $m_\ez>j_\ez$ such that
\begin{equation}\label{m epsilon}
\sup\lf\{\lf|\tilde g_\ez(x)-\tilde g_\ez(y)\r|:\,\,x,\,y\in R_{m_\ez}\setminus R_{m_\ez-1} \r\}<\ez.
\end{equation}
To see this, by (ii), let $m_\ez >j_\ez+k_\ez+i_\ez$ be large enough such that when $m_\lz(I)\ge 2^{m_\ez-i_\ez-j_\ez}$,
\begin{equation}\label{mofi epsilon}
\mofi<\{C_2[j_\ez+2\lz(j_\ez+1)+2+i_\ez]\}^{-1}\ez
\end{equation}
for some positive constant $C_2>2^{3(2\lz+1)+2}$.

By \eqref{ix length} and \eqref{ix inclusion}, we see that
$$2^{j_\ez+\lfloor2\lz(j_\ez+1)\rfloor+2+i_\ez}I_x\subset R_{m_\ez+1}\subset4\cdot2^{j_\ez+\lfloor2\lz(j_\ez+1)\rfloor+2+i_\ez}I_x.$$
This together with \eqref{reverse doubl} and \eqref{mofi epsilon} implies that
\begin{eqnarray}\label{m epsi minus-i}
\lf|f_{2^{j_\ez+\lfloor2\lz(j_\ez+1)\rfloor+2+i_\ez}I_x,\,\lz}-f_{R_{m_\ez+1},\,\lz}\r|
&\leq&\frac{m_\lz(R_{m_\ez+1})}{m_\lz(2^{j_\ez+\lfloor2\lz(j_\ez+1)\rfloor+2+i_\ez}I_x)}M_\lz(f,\,R_{m_\ez+1})\noz\\
&<&2^{2(2\lz+1)}\frac{\ez}{C_2[j_\ez+2\lz(j_\ez+1)+2+i_\ez]}\noz\\
&<&\ez/8.
\end{eqnarray}
Similarly, observe that $R_{m_\ez+1}\subset 8(R_{m_\ez}\setminus R_{m_\ez-1})$. Thus by \eqref{reverse doubl},

\begin{eqnarray}\label{m epsi minus-ii}
\lf|f_{R_{m_\ez+1}}-f_{R_{m_\ez}\setminus R_{m_\ez-1}}\r|
&\leq&\frac{m_\lz(R_{m_\ez+1})}{m_\lz(R_{m_\ez}\setminus R_{m_\ez-1})}M_\lz(f,\,R_{m_\ez+1})\noz\\
&<&2^{3(2\lz+1)}\frac{\ez}{C_2[j_\ez+2\lz(j_\ez+1)+2+i_\ez]}\noz\\
&<&\ez/8.
\end{eqnarray}
By \eqref{mofi epsilon}, \eqref{m epsi minus-i}, \eqref{m epsi minus-ii} and \eqref{ix meas lower bdd}, we conclude that for any $I_x$ with $x\in R_{m_\ez}\setminus R_{m_\ez-1}$,
\begin{eqnarray}\label{m epsi minus}
&&\lf|f_{I_x,\,\lz}-f_{R_{m_\ez}\setminus R_{m_\ez-1},\,\lz}\r|\nonumber\\
&&\quad\le \lf|f_{2^{j_\ez+\lfloor2\lz(j_\ez+1)\rfloor +2+i_\ez}I_x,\,\lz}-f_{R_{m_\ez}\setminus R_{m_\ez-1},\,\lz}\r|+
\sum_{j=0}^{{j_\ez+\lfloor2\lz(j_\ez+1)\rfloor+1+i_\ez}}\lf|f_{2^jI_x,\,\lz}-f_{2^{j+1}I_x,\,\lz}\r|\nonumber\\
&&\quad\le\lf|f_{2^{j_\ez+\lfloor2\lz(j_\ez+1)\rfloor+2+i_\ez} I_x,\,\lz}-f_{R_{m_\ez+1},\,\lz}\r|
+\lf|f_{R_{m_\ez+1,\,\lz}}-f_{R_{m_\ez}\setminus R_{m_\ez-1},\,\lz}\r|\nonumber\\
&&\quad\quad+\sum_{j=0}^{{j_\ez+\lfloor2\lz(j_\ez+1)\rfloor+1+i_\ez}}2^{2\lz+1}\frac\ez{C_2[j_\ez+2\lz(j_\ez+1)+2+i_\ez]}\nonumber\\
&&\quad<\ez/8+\ez/8+\frac{2^{2\lz+1}}{C_2}\ez\noz\\
&&\quad<\ez/2.
\end{eqnarray}
So for any $I_x$, $I_y$ with $x, y\in R_{m_\ez}\setminus R_{m_\ez-1},$
$$\lf|f_{I_x,\,\lz}-f_{I_y,\,\lz}\r|\le \lf|f_{I_x,\,\lz}-f_{R_{m_\ez}\setminus R_{m_\ez-1},\,\lz}\r|
+\lf|f_{R_{m_\ez}\setminus R_{m_\ez-1},\,\lz}-f_{I_y,\,\lz}\r|<\ez.$$
This shows \eqref{m epsilon}.

{\bf Step II} Define $g_\ez(x):=\tilde g_\ez(x)$ when $x\in R_{m_\ez}$ and $g_\ez(x):=f_{R_{m_\ez}\setminus R_{m_\ez-1},\,\lz}$
when $x\in \rrp\setminus R_{m_\ez}$. Before proving \eqref{appro cmo-1} and \eqref{appro com-2}, we
first claim that there exists a positive constant $C_3$ such that
if $\bar{I_x}\cap\bar{I_y}\not=\emptyset$ or $x,\,y\in \rrp\setminus R_{m_\ez-1}$, then
\begin{equation}\label{g epsilon func}
|g_\ez(x)-g_\ez(y)|<C_3\ez.
\end{equation}

In fact,  assume that $x<y$. We first show that if  $x,\,y\in \rrp\setminus R_{m_\ez-1}$, then \eqref{g epsilon func} holds.
Firstly, if $x,\,y\in \rrp\setminus R_{m_\ez}$, then
$$g_\ez(x)=g_\ez(y)=f_{R_{m_\ez}\setminus R_{m_\ez-1},\,\lz}$$
 and \eqref{g epsilon func} holds.
 Secondly, if $x,\,y\in R_{m_\ez}\setminus R_{m_\ez-1}$, then from \eqref{m epsilon}, we deduce that
 $$\lf|g_\ez(x)-g_\ez(y)\r|=\lf|\tilde g_\ez(x)-\tilde g_\ez(y)\r|<\ez.$$
Thirdly, if $x\in  R_{m_\ez}\setminus R_{m_\ez-1}$ and $y\in \rrp\setminus R_{m_\ez}$, then from \eqref{m epsi minus},
it follows that
$$\lf|g_\ez(x)-g_\ez(y)\r|=\lf|\tilde g_\ez(x)-f_{R_{m_\ez}\setminus R_{m_\ez-1},\,\lz}\r|<\ez/2.$$

Now we show if $\bar I_x\cap \bar I_y\not=\emptyset$, then \eqref{g epsilon func} holds. In fact, assume that $I_x\not= I_y$
and define $I:=I_x\cup I_y$. Observe that by the choice of $I_x$,
$|I_y|/2\le|I_x|\le 2|I_y|$ if $\bar{I_x}\cap\bar{I_y}\not=\emptyset$.
If  $x,y\in R_{j_\ez}$ and $I_x,\,I_y\in\{I^{j_\ez}_1,\cdots,\,I^{j_\ez}_{2^{j_\ez+i_\ez+2+\lfloor2\lz(j_\ez+1)\rfloor}}\}$,
by \eqref{ix-upper bdd},
we have that $m_\lz(I)\le m_\lz(I_1^{j_\ez+1})\le 2^{-i_\ez+1}$. And from \eqref{i epsilon} and \eqref{reverse doubl}, it follows that
$$\lf|g_\ez(x)-g_\ez(y)\r|\le \lf|\tilde g_\ez(x)-f_{I,\,\lz}\r|+\lf|f_{I,\,\lz}-\tilde g_\ez(y)\r|\ls M_\lz(f, I)\ls \ez.$$
Similarly, if $I_x=I^{j_\ez}_{2^{j_\ez+i_\ez+2+\lfloor2\lz(j_\ez+1)\rfloor}}$,
$I_y=I^{j_\ez+1}_1$ or $I_x=I^{j_\ez+1}_1$ and $I_y=I^{j_\ez+1}_2$, then
arguing as in \eqref{ix-upper bdd}, we see that $m_\lz(I)\le 2^{-i_\ez+1}$. By \eqref{i epsilon} and \eqref{reverse doubl} again,
$$\lf|g_\ez(x)-g_\ez(y)\r|\le \lf|\tilde g_\ez(x)-f_{I,\,\lz}\r|+\lf|f_{I,\,\lz}-\tilde g_\ez(y)\r|\ls M_\lz(f, I)\ls \ez.$$
Finally, if $I_x\notin \{I^{j_\ez}_1,\cdots,\,I^{j_\ez}_{2^{j_\ez+i_\ez+2+\lfloor2\lz(j_\ez+1)\rfloor}},\,I^{j_\ez+1}_1\}$ and $y>x$, then
$I\cap R_{j_\ez}=\emptyset$. It follows from \eqref{j epsilon} and \eqref{reverse doubl} that
$$\lf|g_\ez(x)-g_\ez(y)\r|\le \lf|\tilde g_\ez(x)-f_{I,\,\lz}\r|+\lf|f_{I,\,\lz}-\tilde g_\ez(y)\r|\ls M_\lz(f, I)\ls \ez.$$
Combining these cases, \eqref{g epsilon func} holds.

The function $g_\ez$ satisfies \eqref{appro cmo-1}. In fact, let
$$\tilde h_\ez(x):=g_\ez(x)-f_{R_{m_\ez}\setminus R_{m_\ez-1},\,\lz}.$$
Then by the definition of $g_\ez$, we see that
$$\tilde h_\ez(x)=0\,\,{\rm for\, \,any}\,\,x\in \rrp\setminus R_{m_\ez},\,\,\lf\|\tilde h_\ez-g_\ez\r\|_\bmoz=0.$$
Moreover, if $\bar{I_x}\cap\bar{I_y}\not=\emptyset$ or $x,\,y\in \rrp\setminus R_{m_\ez-1}$, then from \eqref{g epsilon func},
it follows that
$$\lf|\tilde h_\ez(x)-\tilde h_\ez(y)\r|=|g_\ez(x)-g_\ez(y)|<C_3\ez.$$
Observe that $\supp(\tilde h_\ez)\subset R_{m_\ez}$ and there exists a function $h_\ez\in C_c(\rrp)$
such that for any $x\in\rrp$,
$$\lf|\tilde h_\ez(x)-h_\ez(x)\r|<C_3\ez.$$
Then let $\omega\in C_c(\rr)$ be a positive valued function with
$\int_\rr \omega(x)\,dx=1$ and $\omega_t(x):=\frac1t\omega(\frac xt)$ for any $t\in\rrp$ and $x\in\rr$.
Then we see that
$\omega_t\ast h_\ez(x)\to h_\ez(x)$ uniformly for $x\in\rrp$ as $t\to0^+$, which yields the following inequality:
\begin{align*}
\lf\|\omega_t\ast h_\ez-g_\ez\r\|_\bmoz&\le \lf\|\omega_t\ast h_\ez-h_\ez\r\|_\bmoz+\lf\|h_\ez-\wz h_\ez\r\|_\bmoz\\
&\quad+\lf\|\wz h_\ez-g_\ez\r\|_\bmoz\\
&\ls \lf\|\omega_t\ast h_\ez-h_\ez\r\|_\linz+\ez.
\end{align*}
Hence, by letting $t\to 0^+$ we get that \eqref{appro cmo-1} holds.

Now we show \eqref{appro com-2}. From the definitions of $i_\ez$ and $j_\ez$, we deduce that for any $x\in R_{m_\ez}$,
\begin{equation}\label{rm epsilon}
\int_{I_x}|f(y)-g_\ez(y)|\,y^{2\lz}dy\ls \ez m_\lz(I_x).
\end{equation}
In fact,
\begin{equation*}
\int_{I_x}|f(y)-g_\ez(y)|\,y^{2\lz}dy=\int_{I_x}\lf|f(y)-\tilde g_\ez(y)\r|\,y^{2\lz}dy=\int_{I_x}\lf|f(y)-f_{I_x,\,\lz}\r|\,y^{2\lz}dy.
\end{equation*}
If $I_x\cap R_{j_\ez}=\emptyset$, then by \eqref{j epsilon}, \eqref{rm epsilon} holds.
If $I_x\cap R_{j_\ez}\not=\emptyset$, i. e.,
$I_x\in\{I^{j_\ez}_1,\cdots\,I^{j_\ez}_{2^{j_\ez+i_\ez+2+\lfloor2\lz(j_\ez+1)\rfloor}}\}$, then $m_\lz(I_x)\le 2^{-i_\ez}$.
From  this fact and \eqref{i epsilon}, \eqref{rm epsilon} follows.

Let $I$ be an arbitrary interval in $\rrp$. To show \eqref{appro com-2}, we only need to prove that
\begin{equation}\label{appro g-ez}
M_\lz(f-g_\ez, I)\ls \ez.
\end{equation}
To this end, we consider the following four cases:

Case i) $I\subset R_{m_\ez}$ and $\max\{|I_x|:\,\, I_x\cap I\not=\emptyset\}>4|I|$. In this case, the cardinality of
the set $\{I_x:\,\, I_x\cap I\not=\emptyset\}$ is at most 2 and hence,
$\bar I_{x_i}\cap \bar I_{x_j}\not=\emptyset$ if $I_{x_i}\cap I\not=\emptyset$ and $I_{x_j}\cap I\not=\emptyset$.
By \eqref{g epsilon func}, we have that
\begin{eqnarray*}
M_\lz(g_\ez, I)&&\le \frac1{m_\lz(I)}\sum_{i:\, I_{x_i}\cap I\not=\emptyset}\int_{I_{x_i}\cap I}\frac1{m_\lz(I)}
\sum_{j:\, I_{x_j}\cap I\not=\emptyset}\int_{I_{x_j}\cap I}
\lf|g_\ez(x)-g_\ez(y)\r|y^{2\lz}\,dy\,\xtz\ls\ez.
\end{eqnarray*}
Moreover, if $I\cap R_{j_\ez}\not=\emptyset$, then
$m_\lz(I)\le m_\lz(I^{j_\ez+1}_1)\le 2^{-i_\ez}$.  By \eqref{i epsilon}, we see that
$\mofi<\ez$ and so
$$M_\lz(f-g_\ez, I)\le \mofi+M_\lz(g_\ez, I)<\ez+M_\lz(g_\ez, I)\ls \ez.$$
If $I\cap R_{j_\ez}=\emptyset$, then by \eqref{j epsilon}, we also
see that $\mofi<\ez$ and $M_\lz(f-g_\ez, I)\ls \ez$.

Case ii) $I\subset R_{m_\ez}$ and $\max\{|I_x|:\,\, I_x\cap I\not=\emptyset\}\leq4|I|$.
In this case, from \eqref{reverse doubl}, it follows that
$$\sum_{i:\, I_{x_i}\cap I\not=\emptyset}m_\lz(I_{x_i})\sim m_\lz(I).$$
Since $I\subset R_{m_\ez}$, then $x\in R_{m_\ez}$ if $I_x\cap I\not=\emptyset$.
By this and \eqref{rm epsilon}, we see that
$$M_\lz(f-g_\ez, I)\ls\frac1{m_\lz(I)}\sum_{i:\, I_{x_i}\cap I\not=\emptyset}
\int_{I_{x_i}}\lf|f(y)-g_\ez(y)\r|\,y^{2\lz}dy\ls\frac1{m_\lz(I)}\sum_{i:\, I_{x_i}\cap I\not=\emptyset} m_\lz(I_{x_i})\ez\ls\ez.$$
Thus, \eqref{appro g-ez} holds in this case.

%

Case iii) $I\subset (\rrp\setminus R_{m_\ez-1})$. In this case, $I\cap R_{j_\ez}=\emptyset$.
By \eqref{j epsilon}, we see that $\mofi<\ez$. Similar to Case i), it then suffices to estimate $M_\lz(g_\ez, I)$.
However, by \eqref{g epsilon func}, $M_\lz(g_\ez, I)\ls\ez$. Thus, \eqref{appro g-ez} holds.

%
%
%

Case iv) $I\cap(\rrp\setminus R_{m_\ez})\neq\emptyset$ and $I\cap R_{m_\ez-1}\not=\emptyset$.
Let $p_I$ be the smallest integer such that $I\subset R_{p_I}$.
Then by \eqref{reverse doubl},
$$M_\lz(f-g_\ez, I)\ls M_\lz(f-g_\ez, R_{p_I}).$$
Moreover,
\begin{eqnarray*}
&&M_\lz\lf(f-g_\ez, R_{p_I}\r)m_\lz\lf(R_{p_I}\r)\\
&&\quad\ls\int_{R_{p_I}}\lf|(f-g_\ez)(x)-(f-g_\ez)_{R_{p_I}\setminus R_{m_\ez},\,\lz}\r|\,\xtz\\
&&\quad\ls\int_{R_{p_I}}\lf|f(x)-f_{R_{p_I}\setminus R_{m_\ez},\,\lz}\r|\,\xtz+\int_{R_{p_I}}\lf|g_\ez(x)-
(g_\ez)_{R_{p_I}\setminus R_{m_\ez},\,\lz}\r|\,\xtz.
\end{eqnarray*}
On the one hand, observe that $m_\lz(R_{p_I})\ge m_\lz(R_{m_\ez})\ge 2^{k_\ez}$. By  this,  \eqref{reverse doubl} and \eqref{k epsilon},
we have that
\begin{eqnarray*}
&&\int_{R_{p_I}}\lf|f(x)-f_{R_{p_I}\setminus R_{m_\ez},\,\lz}\r|\,\xtz\\
&&\quad\le\int_{R_{p_I}}\lf|f(x)-f_{R_{p_I},\,\lz}\r|\,\xtz+\lf|f_{R_{p_I},\,\lz}-f_{R_{p_I}\setminus R_{m_\ez},\,\lz}\r|m_\lz(R_{p_I})\\
&&\quad\ls
\int_{R_{p_I}}\lf|f(x)-f_{R_{p_I},\,\lz}\r|\,\xtz
\ls \ez m_\lz(R_{p_I}).
\end{eqnarray*}

On the other hand, it is obvious that
$$\sum_{i:\,I_{x_i}\in \mathcal I,\,I_{x_i}\subset R_{m_\ez}}m_\lz(I_{x_i})= m_\lz(R_{m_\ez}).$$
From this, the fact that $g_\ez(x)=g_\ez(y)$ for any $x,\,y\in(\rrp\setminus R_{m_\ez})$,
\eqref{k epsilon} and \eqref{rm epsilon}, we deduce that
\begin{eqnarray*}
&&\int_{R_{p_I}}\lf|g_\ez(x)-(g_\ez)_{R_{p_I}\setminus R_{m_\ez},\,\lz}\r|\,\xtz\\
&&\quad\le \frac1{m_\lz(R_{p_I}\setminus R_{m_\ez})}\int_{R_{p_I}}\int_{R_{p_I}\setminus R_{m_\ez}}
\lf|g_\ez(x)-g_\ez(y)\r|y^{2\lz}\,dy\,\xtz\\
&&\quad=\frac1{m_\lz(R_{p_I}\setminus R_{m_\ez})}\int_{R_{m_\ez}}\int_{R_{p_I}\setminus R_{m_\ez}}\lf|g_\ez(x)-g_\ez(y)\r| y^{2\lz}\,dy\,\xtz\\
&&\quad\le \frac1{m_\lz(R_{p_I}\setminus R_{m_\ez})}\int_{R_{m_\ez}}\int_{R_{p_I}\setminus R_{m_\ez}}
\lf[\lf|g_\ez(x)-f(x)\r|+\lf|f(x)-f_{R_{m_\ez}\setminus R_{m_\ez-1},\,\lz}\r|\r]y^{2\lz}\,dy\,\xtz\\
&&\quad\le \frac1{m_\lz(R_{p_I}\setminus R_{m_\ez})}\int_{R_{p_I}\setminus R_{m_\ez}}\sum_{i:\,\,I_{x_i}\in \mathcal I,\,I_{x_i}\subset R_{m_\ez}}
\int_{I_{x_i}}
\lf|g_\ez(x)-f(x)\r|\xtz\, y^{2\lz}\,dy\\
&&\quad\quad+\int_{R_{m_\ez}}\lf[\lf|f(x)-f_{R_{m_\ez},\,\lz}\r|
+\lf|f_{R_{m_\ez},\,\lz}-f_{R_{m_\ez}\setminus R_{m_\ez-1},\,\lz}\r|\r]\,\xtz\\
&&\quad\ls \frac1{m_\lz(R_{p_I}\setminus R_{m_\ez})}\int_{R_{p_I}\setminus R_{m_\ez}}\ez\sum_{i:\,I_{x_i}\in \mathcal I,\,I_{x_i}\subset R_{m_\ez}}
m_\lz(I_{x_i})\, y^{2\lz}\,dy+\int_{R_{m_\ez}}\lf|f(x)-f_{R_{m_\ez},\,\lz}\r|\,\xtz\\
&&\quad\ls \ez m_\lz(R_{m_\ez})\\
&&\quad\ls \ez m_\lz(R_{p_I}).
\end{eqnarray*}


%

This implies \eqref{appro com-2} and finishes the proof of Theorem \ref{t-cmo char}.
\end{proof}

\section{The Fr\'{e}chet-Kolmogorov theorem in the Bessel setting}\label{s4}

In this section, we provide a version of Fr\'{e}chet-Kolmogorov theorem in the Bessel setting,  stating
a necessary and sufficient condition for a subset of $L^p$ to be relatively compact, which is useful in the proof of Theorem \ref{t-riesz compact}.
 For the original  Fr\'{e}chet-Kolmogorov theorem, we refer
the readers to  Yosida \cite{yo}. See also
\cite{cc, hh}.


We first recall that a metric space $(\cx, d)$ is totally bounded if for every $\ez>0$, there exists a finite number of open balls  of radius
$\ez$ whose union is the space $\cx$, and a metric space $(\cx, d)$ is compact
if and only if it is complete and totally bounded; see, for example,
\cite{cc}.
%

\begin{lem}{\rm (\cite{hh})}\label{l-total bdd}
Let $(\cx, d)$ be a metric space. Suppose that for every $\epsilon>0$, there exists some
$\delta>0$, a metric space $({\mathcal W},\tilde d)$ and a mapping $\Phi$: $\cx\rightarrow {\mathcal W}$ such that $\Phi(\cx)$
is totally bounded, and whenever $x, y\in \cx$ are such that
$\tilde d(\Phi(x), \Phi(y))<\delta$, then $d(x, y)<\epsilon$. Then $X$ is totally bounded.
\end{lem}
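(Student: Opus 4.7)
The plan is to unpack the definition of total boundedness directly. Given $\epsilon>0$, we need to exhibit a finite set of points $\{x_1,\ldots,x_N\}\subset\cx$ such that $\cx\subset\bigcup_{i=1}^N B_d(x_i,\epsilon)$, where $B_d(x,r):=\{y\in\cx:d(x,y)<r\}$. The hypothesis supplies the ingredients we need: a corresponding $\delta>0$, a metric space $(\mathcal W,\tilde d)$, and a map $\Phi:\cx\to\mathcal W$ such that $\Phi(\cx)$ is totally bounded and such that $\tilde d(\Phi(x),\Phi(y))<\delta$ forces $d(x,y)<\epsilon$.

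First I would use total boundedness of $\Phi(\cx)$ in $\mathcal W$ to cover it by finitely many open balls $B_{\mathcal W}(w_i,\delta/2)$, $i=1,\ldots,N$, with centers $w_i\in\mathcal W$. We may assume, by discarding redundant indices, that each such ball meets $\Phi(\cx)$; then for each $i$ pick some $x_i\in\cx$ with $\tilde d(\Phi(x_i),w_i)<\delta/2$. The candidate finite $\epsilon$-net for $\cx$ is $\{x_1,\ldots,x_N\}$.

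To verify the covering property, let $x\in\cx$ be arbitrary. Since $\Phi(x)\in\Phi(\cx)$, there exists an index $i$ with $\Phi(x)\in B_{\mathcal W}(w_i,\delta/2)$. The triangle inequality in $(\mathcal W,\tilde d)$ gives
\begin{equation*}
\tilde d(\Phi(x),\Phi(x_i))\le \tilde d(\Phi(x),w_i)+\tilde d(w_i,\Phi(x_i))<\delta/2+\delta/2=\delta,
\end{equation*}
and then the hypothesis on $\Phi$ yields $d(x,x_i)<\epsilon$. Thus $x\in B_d(x_i,\epsilon)$, proving that $\{B_d(x_i,\epsilon)\}_{i=1}^N$ covers $\cx$.

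There is no real obstacle here: the statement is essentially a transfer of total boundedness through a uniformly continuous-type map, and the only subtlety is remembering to halve the radius $\delta$ so that the triangle inequality lets us pass from centers $w_i$ to chosen preimages $x_i$. I would present it in a single short paragraph in the paper.
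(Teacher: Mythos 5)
Your argument is correct, and it is the standard proof of this transfer principle: cover $\Phi(\cx)$ by finitely many $\tilde d$-balls of radius $\delta/2$, pick preimages $x_i$ of points of $\Phi(\cx)$ in each ball, and use the triangle inequality plus the hypothesis to conclude that $\{x_1,\ldots,x_N\}$ is an $\epsilon$-net for $\cx$. The paper itself gives no proof of this lemma --- it is quoted from \cite{hh} --- and your argument coincides with the one there (the halving of $\delta$ being needed only because the covering balls need not be centered in $\Phi(\cx)$), so nothing further is required.
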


The main result of this section is as follows.

\begin{thm}\label{t-fre kol}
For  $1<p<\infty$, a subset $\cf$ of $L^p(\mathbb R_+,\,dm_\lz)$ is totally bounded (or relatively compact) if and only if
the following statements hold:

{\rm(a)}\ $\cf$ is uniformly bounded, i.e., $\sup_{f\in\cf}\|f\|_{L^p(\mathbb R_+,\,dm_\lz)}<\infty$;

{\rm(b)}\ $\cf$ uniformly vanishes at infinity, i.e., for every $\epsilon>0$, there exists some positive constant $M$
 such that for every $f\in\cf$,
$$\int_M^\infty|f(x)|^px^{2\lambda}\,dx<\epsilon^p;$$

{\rm(c)}\  $\cf$ is uniformly equicontinuous, i.e., for every $\epsilon>0$, there exists some positive constant $\rho$,
such that for every $f\in\cf$ and $y\in \mathbb R_+$ with $y<\rho$,
$$\int_0^\infty|f(x+y)-f(x)|^px^{2\lambda}\,dx<\epsilon^p.$$
\end{thm}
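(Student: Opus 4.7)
The plan is to prove both directions by adapting the classical Fr\'echet--Kolmogorov argument to the weighted space $L^p(\mathbb R_+,\,dm_\lambda)$, keeping careful track of the fact that translation is not an isometry of this space.

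For necessity, (a) is immediate from total boundedness. Given $\epsilon>0$, I would cover $\cf$ by finitely many balls $B(f_i,\epsilon/3)$ with $1\le i\le N$, approximate each $f_i$ by some $g_i\in\cd$ with $\|f_i-g_i\|_{L^p(dm_\lambda)}<\epsilon/3$, and then observe that the finite family $\{g_i\}$ automatically satisfies (b) (each $g_i$ has compact support in $(0,\infty)$) and (c) (each $g_i$ is uniformly continuous with support bounded away from $0$, so that $\|g_i(\cdot+y)-g_i(\cdot)\|_{L^\infty}^p$ times the $m_\lambda$-mass of a fixed compact set tends to zero as $y\to0^+$). The triangle inequality then transfers these properties from the $g_i$ to all of $\cf$ at the cost of a factor of three.

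For sufficiency, I would invoke Lemma \ref{l-total bdd}. Given $\epsilon>0$, choose $M>0$ by (b) and $\rho>0$ by (c), partition $(0,M]$ into consecutive intervals $I_1,\ldots,I_N$ each of length at most $\rho$, and set $\Phi:\cf\to\mathbb R^N$ by $\Phi(f):=(f_{I_1,\lambda},\ldots,f_{I_N,\lambda})$, with $f_{I_j,\lambda}$ as in \eqref{average}. By (a) and H\"older's inequality, $|f_{I_j,\lambda}|\le\|f\|_{L^p(dm_\lambda)}/m_\lambda(I_j)^{1/p}$, so $\Phi(\cf)$ is a bounded, hence totally bounded, subset of $\mathbb R^N$. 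Writing $\Phi^\ast(f):=\sum_{j=1}^N f_{I_j,\lambda}\chi_{I_j}$ and equipping $\mathbb R^N$ with the metric $\tilde d(\vec a,\vec b):=\bigl(\sum_{j=1}^N m_\lambda(I_j)|a_j-b_j|^p\bigr)^{1/p}$ (so that the middle term below equals $\tilde d(\Phi(f),\Phi(g))$), I would use the splitting
\[
\|f-g\|_{L^p(dm_\lambda)}\le\|f-\Phi^\ast(f)\|_{L^p(dm_\lambda)}+\|\Phi^\ast(f)-\Phi^\ast(g)\|_{L^p(dm_\lambda)}+\|\Phi^\ast(g)-g\|_{L^p(dm_\lambda)},
\]
combined with the tail bound $\|f\chi_{(M,\infty)}\|_{L^p(dm_\lambda)}<\epsilon$ coming from (b), to verify the hypotheses of Lemma \ref{l-total bdd}.

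The main obstacle will be the weighted Poincar\'e-type bound $\|f-\Phi^\ast(f)\|_{L^p((0,M],\,dm_\lambda)}\ls\epsilon$. I would apply Jensen's inequality to obtain, for each $j$,
\[
\int_{I_j}|f(x)-f_{I_j,\lambda}|^p x^{2\lambda}\,dx\le \frac{1}{m_\lambda(I_j)}\int_{I_j}\int_{I_j}|f(x)-f(y)|^p y^{2\lambda}\,dy\,x^{2\lambda}\,dx,
\]
then substitute $y=x+s$, use the symmetry $|f(x)-f(y)|=|f(y)-f(x)|$ to reduce to $s>0$, swap the sum in $j$ with the $s$-integral by Fubini, and bound the resulting quantity by $\sup_{0<s<\rho}\|f(\cdot+s)-f(\cdot)\|_{L^p(dm_\lambda)}^p$. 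The delicate technical point is controlling the ratio $(x+s)^{2\lambda}/m_\lambda(I_j)$ uniformly over $j$: for intervals $I_j$ bounded away from $0$ this ratio is comparable to $\rho^{-1}$, while for the interval $I_1$ adjacent to the origin one must either refine the partition there or exploit the extra smallness of the weight $x^{2\lambda}$ near zero so that (c) still delivers an $O(\epsilon^p)$ bound.
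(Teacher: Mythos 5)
Your proposal is correct and follows essentially the same route as the paper: necessity via a finite $\epsilon$-cover together with density of $\cd$, and sufficiency via Lemma \ref{l-total bdd} with the same partition of $(0,M]$ into length-$\rho$ intervals, the same averaging map $\Phi$ into a finite-dimensional space, and conditions (b), (c) supplying the tail and Poincar\'e-type estimates. The only (harmless) variation is that you take $dm_\lambda$-averages on every interval and symmetrize to $s>0$; your worry about the interval at the origin is unnecessary, since for $x\in I_1$ and $0<s\le\rho$ one has $(x+s)^{2\lambda}\,|I_1|\le 2^{2\lambda}(2\lambda+1)\,m_\lambda(I_1)$, so your ratio bound $(x+s)^{2\lambda}/m_\lambda(I_j)\lesssim\rho^{-1}$ holds uniformly in $j$ (the paper handles the same point by using unweighted averages on the intervals away from zero and the weighted average only on the first one).
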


\begin{proof}
Assume that $\cf\subset L^p(\mathbb R_+,\,dm_\lz)$ satisfies the three conditions.
By Lemma \ref{l-total bdd}, to show $\mathcal F$ is totally bounded, it suffices to prove that
for any $\ez>0$, there exists a mapping $\Phi$ on $\lpz$ such that $\Phi(\mathcal F)$ is totally bounded
and that
\begin{equation}\label{totally bdd}
\|f-g\|_{L^p(\mathbb R_+,\,dm_\lz)}<\epsilon
\end{equation}
for any $f,\,g\in\cf$ such that
\begin{equation}\label{phi lpz regularity}
\|\Phi(f)-\Phi(g)\|_{L^p(\mathbb R_+,\,dm_\lz)}<\epsilon/2.
\end{equation}

To this end, given $\epsilon>0$,
pick $M$ as in the condition ${\rm(b)}$, such that
\begin{equation}\label{f-vani-bd}
\sup_{f\in\cf}\|f-f\chi_{(0,\,M)}\|_{L^p(\mathbb R_+,\,dm_\lz)}<\frac{\epsilon}{12}.
\end{equation}
Let  $\rho$  be as in condition ${\rm (c)}$ such that
\begin{equation}\label{f-equaconti-bd}
\sup_{y\in (0,\,\rho]}\lf(\sup_{f\in\cf}\|f(\cdot+y)-f(\cdot)\|_{L^p(\mathbb R_+,\,dm_\lz)}\r)<\frac{\epsilon}{[2(2\lz+1)]^{1/p}12}.
\end{equation}
Let $N:=\lfloor M/\rho\rfloor+1$, $\tilde{I}_1:=\tilde{I}:=(0,\,\rho]$ and $\tilde{I}_j:=\tilde{I}_1+(j-1)\rho$, $j=2,\ldots,N$. Then
$\{\tilde{I}_j\}_{j=1}^N$ are mutually non-overlapping intervals and
$$(0, M)\subset\bigcup_{j=1}^N\tilde{I}_j.$$
Now define the mapping $\Phi$ by setting for any $f\in \mathcal F$ and $x\in\rrp$,
\begin{equation*}
\Phi(f)(x):=f_{\tilde{I}_1,\,\lz}\chi_{\tilde I_1}(x)+\sum_{j=2}^N\dfrac{1}{|\tilde{I}_j|}\dint_{\tilde{I}_j}f(z)\,dz\chi_{\tilde I_j}(x).
\end{equation*}
We first see that for $f\in\cf$, $\Phi(f)$ is well defined. In fact, if $x\in \tilde{I}_1$, then it follows from the H\"older inequality that
\begin{equation}\label{Phi-I1}
|\Phi(f)(x)|=\frac1{m_\lz(\tilde{I}_1)}\lf|\int_{\tilde{I}_1}f(y)\ytz\r|\le \frac1{[m_\lz(\tilde{I}_1)]^{1/p}}\|f\|_\lpz<\fz;
\end{equation}
while if $x\in \tilde{I}_j,$ $j=2,\ldots,N$, by another application of the H\"older inequality, we also have
\begin{equation}\label{Phi-Ij}
|\Phi(f)(x)|=\dfrac{1}{|\tilde{I}_j|}\lf|\dint_{\tilde{I}_j}f(z)\,dz\r|
\le\dfrac1{|\tilde{I}_j|}\|f\|_\lpz\lf[\int_{\tilde{I}_j}z^{-\frac{2\lz}pp'}\,dz\r]^{1/p'}<\fz.
\end{equation}

Let $\cb_N$ be the linear space spanned by $\{\chi_{\tilde I_j}\}_{j=1}^N$.
Then $\cb_N$ is a finite dimensional Banach space endowed with the norm $\|\cdot\|_\lpz$ for $p\in(1, \fz)$.
Observe that $\Phi(\cf)$ is a subset of $\cb_N$. Moreover, by \eqref{Phi-I1} and \eqref{Phi-Ij}, for any $f\in\mathcal F$,

\begin{eqnarray*}
\|\Phi(f)\|_\lpz&\le&\lf|f_{\tilde I_1,\,\lz}\r|\lf[m_\lz\big(\tilde I_1\big)\r]^{1/p}
+\sum_{j=2}^N\lf| \dfrac{1}{|\tilde{I}_j|}\dint_{\tilde{I}_j}f(z)\,dz\r|\lf[m_\lz\big(\tilde I_j\big)\r]^{1/p}\\
&\ls&\|f\|_\lpz<\fz,
\end{eqnarray*}
where the implicit constant depends only on $\rho$, $p$, $N$ and $\lz$.
Thus, $\Phi(\mathcal F)$ is a bounded set of $(\mathcal B_N, \|\cdot\|_\lpz)$, and hence is totally bounded.

We now prove $\mathcal F$ is  totally bounded.
In fact, from the definition of $\Phi(f)$ and \eqref{f-vani-bd}, we find
\begin{eqnarray*}
\|f-\Phi(f)\|_{L^p(\mathbb R_+,\,dm_\lz)}&&<\epsilon/12+\|f\chi_{(0,\,M)}-\Phi(f)\|_{L^p(\mathbb R_+,\,dm_\lz)}\\
&&\le\epsilon/12+\lf(\sum_{j=1}^N\int_{\tilde{I}_j}|f(x)-\Phi(f)(x)|^p\xtz\r)^{1/p}.\\
\end{eqnarray*}
By the H\"older inequality, a change of variable and \eqref{f-equaconti-bd}, we see that
\begin{eqnarray*}
&&\sum_{j=2}^N\int_{\tilde{I}_j}|f(x)-\Phi(f)(x)|^p\xtz\\
&&\quad\le\sum_{j=2}^N\frac{1}{|\tilde{I}_j|}\int_{\tilde{I}_j}\int_{\tilde{I}_j}|f(x)-f(z)|^pdz\,\xtz\\
&&\quad=\sum_{j=2}^N\frac{1}{|\tilde{I}|}\int_{\tilde{I}_j}\lf[\int_{\{z\in \tilde{I}_j:\,z<x\}}+\int_{\{z\in \tilde{I}_j:\,z\ge x\}}\r]|f(x)-f(z)|^pdz\,\xtz\\
&&\quad\le\sum_{j=2}^N\frac{1}{|\tilde{I}|}\int_{\tilde{I}_j}\int_{\tilde{I}_1}\lf[|f(x)-f(x-y)|^p+|f(x)-f(x+y)|^p\r]\,dy\,\xtz\\
&&\quad=\frac{1}{|\tilde{I}|}\int_{\tilde{I}}\sum_{j=2}^N\int_{\tilde{I}_j}\lf[|f(x)-f(x-y)|^p+|f(x)-f(x+y)|^p\r]\xtz\,dy\\
&&\quad\le\frac{2}{|\tilde{I}|}\int_{\tilde{I}}\int_0^\infty|f(x)-f(x+y)|^p\xtz\,dy\\
&&\quad<\lf(\epsilon/12\r)^p.
\end{eqnarray*}
On the other hand, by the H\"older inequality, we see that
\begin{eqnarray*}
\int_{\tilde{I}_1}|f(x)-\Phi(f)(x)|^p\xtz
&=&\int_{\tilde{I}_1}\lf|f(x)-f_{\tilde I_{1,\,\lz}}\r|^p\xtz\\
&\le&\frac1{m_\lz(\tilde{I}_1)}\int_{\tilde{I}_1}\int_{\tilde{I}_1}|f(x)-f(z)|^pz^{2\lz}\,dz x^{2\lz}dx\\
&=&\frac{2}{m_\lz(\tilde{I})}\int_{\tilde{I}}\int_{x,\,z\in\tilde{I},\,z>x}|f(x)-f(z)|\ztz\,\xtz\\
&\le&\frac{2}{m_\lz(\tilde{I})}\int_0^\rho\int_0^{\rho-x}|f(x)-f(x+y)|^p\,(x+y)^{2\lz}dy\,\xtz\\
&\le&\frac{2\rho^{2\lz}}{m_\lz(\tilde{I})}\int_0^\rho\int_0^{\rho}|f(x)-f(x+y)|^p\,\xtz\,dy\\
&\le&2(2\lz+1)\|f(\cdot)-f(\cdot+y)\|_{L^p(\mathbb R_+,\,dm_\lz)}^p\\
&<&\lf(\epsilon/12\r)^p,
\end{eqnarray*}
where the last inequality follows from the estimate in \eqref{f-equaconti-bd}.

Combining these two inequalities above, we conclude that for any $f\in\cf$,
\begin{equation}\label{phi appro lpz}
\|f-\Phi(f)\|_{L^p(\mathbb R_+,\,dm_\lz)}<\epsilon/4.
\end{equation}
By \eqref{phi appro lpz} and the linearity of $\Phi$, we further deduce that for any
$f,\,g\in\cf$ satisfying \eqref{phi lpz regularity},
$$\|f-g\|_{L^p(\mathbb R_+,\,dm_\lz)}\le \|f-\Phi(f)\|_\lpz+\|\Phi(f)-\Phi(g)\|_{L^p(\mathbb R_+,\,dm_\lz)}+\|\Phi(g)-g\|_\lpz<\epsilon.$$
Thus \eqref{totally bdd} holds and $\cf$ is totally bounded by Lemma \ref{l-total bdd}.

For the converse, assume that $\cf$ is totally bounded. For every $\epsilon>0$,
the existence of a finite $\epsilon-$cover of $\cf$ implies the boundedness of $\cf$, thus the condition ${\rm (a)}$ holds.

To show ${\rm (b)}$ holds, given $\epsilon>0$, let $\{U_1, \ldots, U_m\}$ be an $\epsilon-$cover of $\cf$,
and choose $g_j\in U_j$ for $j=1, \ldots, m$. Let $M>0$ such that
$$\int_M^\infty|g_j(x)|^p\xtz<\epsilon^p, \ \ \ \ j=1,\ldots, m.$$
If $f\in U_j$, then $\|f-g_j\|_{L^p(\mathbb R_+,\,dm_\lz)}<\epsilon$; and so
\begin{eqnarray*}
&&\lf(\int_M^\infty|f(x)|^p\xtz\r)^{1/p}\\
&&\quad\le\lf(\int_M^\infty|f(x)-g_j(x)|^p\xtz\r)^{1/p}+\lf(\int_M^\infty|g_j(x)|^p\xtz\r)^{1/p}\\
&&\quad<2\epsilon.
\end{eqnarray*}
Thus ${\rm (b)}$ holds.

For condition ${\rm (c)}$, given $\epsilon>0$, we pick an $\epsilon$-cover $\{U_1, \ldots, U_m\}$ of $\cf$.
Since $\cd$ is dense in $L^p(\mathbb R_+, dm_\lz)$, there exists $g_j\in U_j\cap \mathcal{D}$, for each $j=1, \ldots, m$. It is not hard to see that,
for every $g\in\mathcal D$,
$$\lim_{y\rightarrow 0^+}\int_0^\infty|g(x+y)-g(x)|^p\,x^{2\lz}dx=0.$$
Then there exists $\rho>0$ such that
$$\int_0^\infty|g_j(x+y)-g_j(x)|^p\xtz<\epsilon^p,\ \ \ \ y\in(0,\rho),\ j=1,\ldots, m.$$
Moreover, for any $f\in\cf$, we see that $f\in U_j$ for certain $j=1,\ldots, m$ and hence,
\begin{eqnarray*}
&&\lf(\int_0^\infty|f(x+y)-f(x)|^p\xtz\r)^{1/p}\\
&&\quad\le\lf(\int_0^\infty|f(x+y)-g_j(x+y)|^p\xtz\r)^{1/p}+\lf(\int_0^\infty|g_j(x+y)-g_j(x)|^p\xtz\r)^{1/p}\\
&&\quad\quad+\lf(\int_0^\infty|g_j(x)-f(x)|^p\xtz\r)^{1/p}\\
&&\quad\le\lf(\int_0^\infty|f(x+y)-g_j(x+y)|^p\,(x+y)^{2\lz}dx\r)^{1/p}+\lf(\int_0^\infty|g_j(x+y)-g_j(x)|^p\xtz\r)^{1/p}\\
&&\quad\quad+\lf(\int_0^\infty|g_j(x)-f(x)|^p\xtz\r)^{1/p}\\
&&\quad\le\lf(\int_0^\infty|g_j(x+y)-g_j(x)|^p\xtz\r)^{1/p}+2\lf(\int_0^\infty|g_j(x)-f(x)|^p\xtz\r)^{1/p}\\
&&\quad<5\epsilon.
\end{eqnarray*}
This finishes the proof of Theorem \ref{t-fre kol}.
\end{proof}

\section{The proof of Theorem \ref{t-riesz compact}}\label{s5}

In this section, we give the proof of Theorem \ref{t-riesz compact}. To begin
with, we first recall the following boundedness of $[b,\riz]$ established in \cite{dlwy}.

\begin{lem}{\rm(\cite{dlwy})}\label{l-bdd of riz}
Let $b\in \cup_{q>1}L^q_{\rm loc}(\mathbb{R}_+,\,dm_\lz)$ and $p\in(1, \fz)$.
 Then $b\in\bmoz$ if and only if $[b,\riz]$ is bounded on $\lpz$.
Moreover, there exists a positive constant $C\in(1, \fz)$
such that
\begin{eqnarray*}
C^{-1}\|b\|_{\bmoz}&\le&\left\|[b, \riz]\right\|_{\lpz\to\lpz} \le C\|b\|_\bmoz.
\end{eqnarray*}
\end{lem}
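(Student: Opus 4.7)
\medskip

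\noindent\textbf{Proof proposal.} This is a Coifman--Rochberg--Weiss type theorem for the Bessel Riesz transform, and the plan is to prove the two directions separately using the kernel estimates and doubling property assembled in Section \ref{s2}.

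For the upper estimate $\|[b,\riz]\|_{\lpz\to\lpz}\lesssim\|b\|_\bmoz$, I would first note that by Lemma \ref{l-RieszCZ} (i)--(ii) together with Proposition \ref{p-redoubl}, $\riz$ is a Calder\'on--Zygmund operator on the space of homogeneous type $(\rrp,|\cdot|,dm_\lz)$. The standard sharp maximal function machinery then gives: for each fixed $r\in(1,p)$,
\begin{equation*}
M^\sharp\lf([b,\riz]f\r)(x)\ls \|b\|_\bmoz\lf\{M_r(\riz f)(x)+M_r f(x)\r\},
\end{equation*}
where $M^\sharp$ is the Fefferman--Stein sharp maximal operator on $(\rrp,dm_\lz)$, $M$ the Hardy--Littlewood maximal operator, and $M_rg:=[M(|g|^r)]^{1/r}$. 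Combining the Fefferman--Stein inequality $\|g\|_\lpz\ls\|M^\sharp g\|_\lpz$ with the $\lpz$ boundedness of $M$ and of $\riz$ (the latter being Calder\'on--Zygmund on a space of homogeneous type) yields the upper bound.

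For the reverse estimate $\|b\|_\bmoz\ls\|[b,\riz]\|_{\lpz\to\lpz}$, I would adapt the Janson/Uchiyama scheme using the new pointwise lower bound Proposition \ref{p-lower bdd riesz}. Given an interval $I=I(x_0,r)$ (we may assume $x_0\ge r$ by \eqref{assum interval}), the key step is to select a \emph{twin interval} $\tilde I$ with $m_\lz(\tilde I)\sim m_\lz(I)$ so that for all $y\in I$ and $z\in\tilde I$ one has $z<y$, $y-z\sim r$, and $m_\lz(I(y,y-z))\sim m_\lz(I)$. Proposition \ref{p-lower bdd riesz} then supplies a uniform lower bound of constant sign,
\begin{equation*}
-\riz(y,z)\gs \frac{1}{m_\lz(I)},\qquad y\in I,\ z\in\tilde I.
\end{equation*}
Setting $f_I(z):=\chi_{\tilde I}(z)\,\mathrm{sgn}\lf(b(z)-b_{\tilde I,\lz}\r)$ and integrating $[b,\riz]f_I$ against the sign of $b-b_{I,\lz}$ on $I$, one obtains after standard manipulations (adding and subtracting $b_{I,\lz}$ and $b_{\tilde I,\lz}$ and using the constant sign of the kernel)
\begin{equation*}
M_\lz(b,I)\ls \frac{1}{m_\lz(I)}\int_I\lf|[b,\riz]f_I(y)\r|\,y^{2\lz}\,dy,
\end{equation*}
and H\"older's inequality combined with $\|f_I\|_\lpz\le m_\lz(\tilde I)^{1/p}\sim m_\lz(I)^{1/p}$ finishes the argument.

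The main obstacle is the construction of $\tilde I$ near the origin of $\rrp$: when $x_0\ge 3r$ one can take $\tilde I:=I(x_0-2r,r/2)$ and proceed directly, but when $r\le x_0<3r$ the natural left-translate of $I$ drops out of $\rrp$ and one must instead place $\tilde I$ on the other side, e.g.\ $\tilde I:=I(x_0+3r,r/2)$, and apply Proposition \ref{p-lower bdd riesz} with the roles of $y$ and $z$ interchanged. The doubling/reverse doubling statement \eqref{reverse doubl} is used to ensure $m_\lz(\tilde I)\sim m_\lz(I)\sim m_\lz(I(y,y-z))$ uniformly in both cases. A secondary, purely technical issue is that $\mathrm{sgn}(b-b_{\tilde I,\lz})\chi_{\tilde I}$ need not have mean zero; this is harmless here since we do not invoke any cancellation of $f_I$, only that the cross terms produced by replacing $b(y)$ by $b(y)-b_{I,\lz}$ in $[b,\riz]f_I$ cancel against the matching constant inside the integral.
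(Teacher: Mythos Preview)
The paper does not prove this lemma: it is quoted verbatim from \cite{dlwy}, so there is no in-paper proof to compare against. Your outline is a standard and essentially correct Coifman--Rochberg--Weiss scheme, and the upper bound via the sharp maximal function on the homogeneous space $(\rrp,|\cdot|,dm_\lz)$ is unproblematic.

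For the lower bound, one remark. Your case split at the origin is avoidable, and the way you describe Case~2 is slippery. Proposition~\ref{p-lower bdd riesz} gives a sign and size for $\riz(y,z)$ only when $z<y$; the kernel is \emph{not} symmetric, so ``interchanging the roles of $y$ and $z$'' cannot mean using $\riz(z,y)$ in place of $\riz(y,z)$ inside $[b,\riz]f_I(y)$. What does work---and what \cite{dlwy} does, as reflected in the paper's own Lemma~\ref{l-cmo-contra}---is to place the auxiliary interval to the \emph{right} of $I$ in every case and to put the test function on $I$ itself (not on $\tilde I$). Concretely, with $f_I$ supported on $I$ (built from the sign of $b-\alpha_I(b)$ and corrected to have mean zero as in \eqref{fj proper-2}) and $J:=(x_0+A_1r,\,x_0+A_2r)$ for suitable $A_2>A_1>4$, every $y\in J$ and $z\in I$ satisfy $y>z$, so Proposition~\ref{p-lower bdd riesz} applies uniformly with no case distinction. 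The lower bound then reads
\[
\int_{J}\bigl|[b,\riz]f_I(y)\bigr|^p\,y^{2\lz}\,dy\ \gtrsim\ M_\lz(b,I)^p,
\]
and H\"older plus $\|f_I\|_\lpz\lesssim 1$ gives $M_\lz(b,I)\lesssim\|[b,\riz]\|_{\lpz\to\lpz}$. This sidesteps both the origin issue and the non-symmetry of the kernel; your two-case argument can be made to work, but only after reinterpreting Case~2 as a full swap of the roles of $I$ and $\tilde I$ (test function on $I$, evaluation on $\tilde I$), which you should state explicitly.
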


Before giving the proof of Theorem \ref{t-riesz compact},  we first obtain a lemma for
the upper and lower bounds of integrals of $[b, \riz]f_j$ on certain intervals.
To this end,
we recall the median value in \cite{st,hyy}, see also \cite{j65,s79,j,jt}.
For $f\in L^1_{\rm loc}(\mathbb R_+,\,dm_\lz)$ and $I\subset \rrp$, let $\alpha_I(f)$ be a real number such that
$$\inf_c\frac{1}{m_\lz(I)}\int_I|f(x)-c|dm_\lz(x)$$
is attained. Note that $\frac{1}{m_\lz(I)}\int_I|f-c|dm_\lz$ is uniformly continuous in $c$, so such $\alpha_I(f)$ exists and may not be unique.
Moreover, as in \cite[p. 30]{j} where the setting of $(\mathbb R_+,\,|\cdot|,\,dx)$ was considered, $\alpha_I(f)$ satisfies that
\begin{equation}\label{median value-1}
m_\lz(\{x\in I: f(x)>\alpha_I(f)\})\le m_\lz(I)/2
\end{equation}
and
\begin{equation}\label{median value-2}
m_\lz(\{x\in I: f(x)<\alpha_I(f)\})\le m_\lz(I)/2.
\end{equation}
In fact, if $\alpha_I(f)$ does not satisfy \eqref{median value-1}, then
$$m_\lz(\{x\in I: f(x)>\alpha_I(f)\})> m_\lz(I)/2.$$
Take $\varepsilon>0$ small enough such that
$$m_\lz(\{x\in I: f(x)>\alpha_I(f)+\varepsilon\})> m_\lz(I)/2.$$
We define $I_1:=\{x\in I: f(x)>\alpha_I(f)+\varepsilon\}$ and $I_2:=I\setminus I_1$.
Then
\begin{eqnarray*}
&&\int_I|f(x)-\alpha_I(f)|dm_\lz(x)-\int_I|f(x)-(\alpha_I(f)+\varepsilon)|dm_\lz(x)\\
&&\quad=\int_{I_1}|f(x)-\alpha_I(f)|\,dm_\lz(x)+\int_{I_2}|f(x)-\alpha_I(f)|\,dm_\lz(x)\\
&&\qquad-\int_{I_1}(f(x)-\alpha_I(f)-\varepsilon)\,dm_\lz(x)-\int_{I_2}(\alpha_I(f)+\varepsilon-f(x))\,dm_\lz(x)\\
&&\quad\geq\varepsilon\lf(m_\lz(I_1)-m_\lz(I_2)\r)\\
&&\quad>0.
\end{eqnarray*}
This violates the choice of $\alpha_I(f)$. The proof of \eqref{median value-2} is similar and omitted.

Moreover, by the choice of $\alpha_I(f)$ and Definition \ref{d-bmo}, it is easy to see that for any interval $I\subset \rrp$,
\begin{equation}\label{equi osci}
\mofi\sim\frac1{m_\lz(I)}\int_I\lf|f(y)-\az_I(f)\r|\,\ytz.
\end{equation}

\begin{lem}\label{l-cmo-contra}
Assume that $b\in\bmoz$ with $\|b\|_\bmoz=1$ and there exist $\dz\in(0, \fz)$
and a sequence $\{I_j\}_{j=1}^\fz:=\{I(x_j, r_j)\}_j$ of intervals such that for each $j$,
\begin{equation}\label{lower bdd osci}
M_\lz(b, I_j)>\dz.
\end{equation}
Then there exist functions $\{f_j\}_j\subset \lpz$,  positive constants $A_1>4$, $\wz C_0$, $\wz C_1$ and $\wz C_2$
such that for any integers $j$ and $k\ge\lfloor\log_2 A_1\rfloor$, $\|f_j\|_\lpz \le \wz C_0$,
\begin{equation}\label{lower upper lpbdd riesz comm}
\int_{I_j^k}\lf|\lf[b, \riz\r]f_j(y)\r|^p\ytz\geq\wz C_1\dz^p\frac{[m_\lz(I_j)]^{p-1}}{[m_\lz(2^kI_j)]^{p-1}},
\end{equation}
where $I_j^k:=\lf(x_j+2^kr_j,\,x_j+2^{k+1}r_j\r);$
and
\begin{equation}\label{lower upper lpbdd riesz comm2}
\int_{2^{k+1}I_j\setminus 2^k I_j}\lf|\lf[b, \riz\r]f_j(y)\r|^p\ytz\le \wz C_2 \frac{[m_\lz(I_j)]^{p-1}}{[m_\lz(2^kI_j)]^{p-1}}.
\end{equation}

\end{lem}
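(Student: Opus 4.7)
The plan is to build $f_j$ as a normalized, zero-mean, signed function supported on $I_j$ whose product with $b-b_{I_j,\,\lz}$ is essentially $|b-b_{I_j,\,\lz}|$, and then exploit the one-sided negativity of $\riz(y,z)$ on $\{y>z\}$ established in Proposition \ref{p-lower bdd riesz}. Concretely, set $g_j(y):=\mathrm{sgn}(b(y)-b_{I_j,\,\lz})\chi_{I_j}(y)$ and
\[
f_j(y):=[m_\lz(I_j)]^{-1/p}\lf[g_j(y)-(g_j)_{I_j,\,\lz}\chi_{I_j}(y)\r].
\]
By construction $\supp f_j\subset I_j$, $\int f_j\,\ytz=0$, and $\|f_j\|_\lpz\le 2=:\wz C_0$. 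Since $\int_{I_j}(b-b_{I_j,\,\lz})\,\ytz=0$, the mean correction drops out of the pairing with $b-b_{I_j,\,\lz}$, and \eqref{equi osci} together with the hypothesis $M_\lz(b,I_j)>\dz$ yields $\int_{I_j}(b(z)-b_{I_j,\,\lz})f_j(z)\,\ztz\gs \dz[m_\lz(I_j)]^{1-1/p}$.

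Writing $[b,\riz]=[b-b_{I_j,\,\lz},\riz]$, decompose
\[
[b,\riz]f_j(y)=(b(y)-b_{I_j,\,\lz})\riz f_j(y)-\int_{I_j}\riz(y,z)(b(z)-b_{I_j,\,\lz})f_j(z)\,\ztz=:T_1(y)-T_2(y).
\]
Because $I_j^k$ lies to the right of $I_j$, for $y\in I_j^k$ and $z\in I_j$ one has $y>z$ and $|y-z|\sim 2^kr_j$, so Proposition \ref{p-lower bdd riesz} together with Proposition \ref{p-redoubl} gives $-\riz(y,z)\gs 1/m_\lz(2^kI_j)$ uniformly. Splitting $T_2$ into a main piece coming from $g_j$ and an error piece coming from $(g_j)_{I_j,\,\lz}$, the main piece contributes $-T_2^{\mathrm{main}}(y)\gs \dz[m_\lz(I_j)]^{1-1/p}/m_\lz(2^kI_j)$, while the error piece, after inserting $\riz(y,x_j)$ and using $\int_{I_j}(b-b_{I_j,\,\lz})\,\ztz=0$ and the H\"older regularity \eqref{cz kernel condition-2}, is bounded by $O(2^{-k}[m_\lz(I_j)]^{1-1/p}/m_\lz(2^kI_j))$. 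Hence $-T_2(y)\gs \dz[m_\lz(I_j)]^{1-1/p}/m_\lz(2^kI_j)$ once $2^{-k}$ is small compared to $\dz$.

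For $T_1$, the zero-mean of $f_j$ and \eqref{cz kernel condition-2} yield $|\riz f_j(y)|\ls 2^{-k}[m_\lz(I_j)]^{1/p'}/m_\lz(2^kI_j)$; combining with the telescoping $L^p$-estimate $\int_{2^{k+1}I_j}|b-b_{I_j,\,\lz}|^p\,\ytz\ls (1+k^p)m_\lz(2^{k+1}I_j)$ gives $\int_{I_j^k}|T_1|^p\,\ytz\ls 2^{-kp}(1+k^p)[m_\lz(I_j)]^{p-1}/[m_\lz(2^kI_j)]^{p-1}$. Choosing $A_1=A_1(\dz,p,\lz)>4$ so that this bound is at most a small fraction of the target, Chebyshev's inequality isolates a subset of $I_j^k$ of measure $\sim m_\lz(2^kI_j)$ on which $|T_1|$ is dominated by half of the lower bound for $-T_2$; on this subset $|[b,\riz]f_j|\gs \dz[m_\lz(I_j)]^{1-1/p}/m_\lz(2^kI_j)$, which integrates to \eqref{lower upper lpbdd riesz comm} with $\wz C_1\sim \dz^p$.

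The upper bound \eqref{lower upper lpbdd riesz comm2} is more routine: on the full annulus $y\in 2^{k+1}I_j\setminus 2^kI_j$ the size estimate \eqref{cz kernel condition-1} gives $|\riz(y,z)|\ls 1/m_\lz(2^kI_j)$ for all $z\in I_j$, so H\"older's inequality and $\|b-b_{I_j,\,\lz}\|_{\lppz(I_j,\,\ztz)}\ls[m_\lz(I_j)]^{1/p'}$ control $T_2$, while $T_1$ is handled by the $2^{-k}$-decay of $\riz f_j$ together with the $L^p$-telescoping on $|b(y)-b_{I_j,\,\lz}|$; integrating over the annulus, whose measure is $\sim m_\lz(2^kI_j)$ by Proposition \ref{p-redoubl}, yields the claim. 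The main obstacle is the quantitative coupling between the two terms: the lower bound on $-T_2$ has size $\dz[m_\lz(I_j)]^{1-1/p}/m_\lz(2^kI_j)$, whereas $T_1$ carries the factor $|b(y)-b_{I_j,\,\lz}|$ that grows like $1+k$ in the BMO/$L^p$ sense on $2^{k+1}I_j$, so the $2^{-k}$ decay coming from the zero-mean cancellation of $f_j$ must overcome this growth; the threshold $A_1$ is chosen precisely so that $2^{-k}(1+k)\ls\dz$ for all $k\ge\lfloor\log_2 A_1\rfloor$, making $\wz C_1,\wz C_2$ uniform in $j$.
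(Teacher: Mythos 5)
Your argument is correct and follows the same skeleton as the paper's proof: the same decomposition $[b,\riz]f_j=(b-c)\riz f_j-\riz\lf((b-c)f_j\r)$ with a constant $c$ adapted to $I_j$, the same use of the one-signed lower bound of Proposition \ref{p-lower bdd riesz} (valid since every $y\in I_j^k$ lies to the right of $I_j$) to get a pointwise lower bound for the second term, the same $2^{-k}$ gain from the zero mean of $f_j$ via the regularity estimate \eqref{cz kernel condition-2}, the same John--Nirenberg $k^p$ growth for $\int_{2^{k+1}I_j}|b-c|^p\,dm_\lz$, and the same choice of $A_1=A_1(\dz,p,\lz)$ large. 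The differences are minor but real: you center at the mean $b_{I_j,\lz}$ and take $f_j\propto \mathrm{sgn}(b-b_{I_j,\lz})\chi_{I_j}$ minus its average, whereas the paper centers at a median $\az_{I_j}(b)$ and builds $f_j$ from the indicators of $\{b>\az_{I_j}(b)\}$ and $\{b<\az_{I_j}(b)\}$ plus a small constant; the median construction makes $(b-\az_{I_j}(b))f_j\ge0$ pointwise, so no further splitting of $\riz\lf((b-\az_{I_j}(b))f_j\r)$ is needed, at the price of the median machinery \eqref{median value-1}--\eqref{equi osci}. Your version avoids the median entirely but must control the extra correction term coming from $(g_j)_{I_j,\lz}\chi_{I_j}$, which you do correctly via the cancellation $\int_{I_j}(b-b_{I_j,\lz})\,dm_\lz=0$ and \eqref{cz kernel condition-2}, producing an $O(2^{-k})$ error absorbed into the same largeness condition on $A_1$ (this is where $\|b\|_\bmoz=1$ and $|(g_j)_{I_j,\lz}|\le1$ enter). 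Finally, you combine the main and error terms by Chebyshev on a subset of $I_j^k$ of measure $\sim m_\lz(2^kI_j)$, while the paper integrates the elementary inequality $|A-B|^p\ge2^{1-p}|A|^p-|B|^p$ directly; the two devices are interchangeable here, and your treatment of the upper bound \eqref{lower upper lpbdd riesz comm2} coincides with the paper's.
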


\begin{proof}
For each $j$, define the function $f_j$ as follows:
$$f^1_j:=\chi_{I_{j,\,1}}-\chi_{I_{j,\,2}}:=\chi_{\{x\in I_j:\, b(x)>\az_{I_j}(b)\}}-\chi_{\{x\in I_j:\, b(x)<\az_{I_j}(b)\}},$$
$f^2_j:=a_j\chi_{I_j}$ and
$$f_j:=\lf[m_\lz\lf(I_j\r)\r]^{-1/p}\lf(f^1_j-f^2_j\r),$$
 where $a_j$ is a constant such that
 \begin{equation}\label{fj proper-2}
 \inzf f_j(x)\xtz=0.
  \end{equation}
 Then by the definition of $a_j$, \eqref{median value-1} and \eqref{median value-2}, we see that $|a_j|\le 1/2$. Moreover,
we also have that $\supp(f_j)\subset \bar{I}_j$, and that for any $y\in I_j$,
\begin{equation}\label{fj proper-1}
 f_j(y)\lf[b(y)-\az_{I_j}(b)\r]\ge0.
 \end{equation}
On the other hand, since $|a_j|\le1/2$, we see that for any $y\in (I_{j,\,1}\cup I_{j,\,2})$,
  \begin{equation}\label{fj proper-3}
|f_j(y)|\sim \lf[m_\lz\lf(I_j\r)\r]^{-1/p}.
  \end{equation}
Moreover, since $\supp(f_j)\subset \bar{I}_j$, we have that $\|f_j\|_\lpz\ls 1$.
Observe that
  \begin{equation}\label{com equiv}
[b, \riz]f=\riz\lf([b-\az_{I_j}(b)]f\r)-\lf[b-\az_{I_j}(b)\r]\riz(f).
\end{equation}
Let  $A_1>4$ large enough.
Then for any integer $k\ge\lfloor\log_2 A_1\rfloor$,
\begin{eqnarray}\label{ijk set inclu}
2^{k+1}I_j\subset8I_j^k&=&\lf(x_j-\frac{5}{2}\cdot2^kr_j,\,x_j+\frac{11}{2}\cdot2^kr_j\r)\cap\mathbb R_+\subset2^{k+3}I_j,
\end{eqnarray}
and by \eqref{reverse doubl},
\begin{equation}\label{ijk meas}
m_\lz\lf(I_j^k\r)\sim m_\lz\lf(2^{k}I_j\r).
\end{equation}

We first prove the inequality \eqref{lower upper lpbdd riesz comm}. By the fact that $|y-x_j|>2|z-x_j|$ for any $y\in\mathbb R_+\setminus 2I_j$ and $z\in I_j$,
\eqref{fj proper-3}, \eqref{fj proper-2}, \eqref{cz kernel condition-2}, we see that,
 \begin{eqnarray}\label{upper bdd riesz ope}
\lf|\lf[b(y)-\az_{I_j}(b)\r]\riz(f_j)(y)\r|&=&\lf|b(y)-\az_{I_j}(b)\r|\lf|\int_{I_j}\lf[\riz(y,z)-\riz(y,x_j)\r]f_j(z)\ztz\r|\noz\\
&\le&\lf|b(y)-\az_{I_j}(b)\r|\int_{I_j}|\riz(y,z)-\riz(y,x_j)||f_j(z)|\ztz\noz\\
&\ls& r_j\lf[m_\lz\lf(I_j\r)\r]^{1/p'}\frac{|b(y)-\az_{I_j}(b)|}{|x_j-y|m_\lz(I(y,|y-x_j|))}.
\end{eqnarray}

Moreover, by the well known John-Nirenberg inequality (\cite[p. 594]{cw77}) and \eqref{reverse doubl},
we conclude that  for each $k\in \nn$ and $I\subset\rrp$,
\begin{eqnarray}\label{b-bmo-bdd}
&&\int_{2^{k+1}I}\lf|b(y)-\az_{I}(b)\r|^p\,\ytz\noz\\
&&\quad\ls \int_{2^{k+1}I}\lf|b(y)-\az_{2^{k+1}I}(b)\r|^p\,\ytz
+m_\lz\lf(2^{k+1}I\r)\lf|\az_{2^{k+1}I}(b)-\az_{I}(b)\r|^p\noz\\
&&\quad\ls k^pm_\lz\lf(2^kI\r).
\end{eqnarray}
By this fact, the fact that for any $x$ and $y$,
\begin{equation}\label{equiv meas}
m_\lz(I(x, |x-y|))\sim m_\lz(I(y, |x-y|)),
\end{equation}
 \eqref{upper bdd riesz ope}, \eqref{ijk set inclu} and \eqref{reverse doubl}, we see that
there exists a positive constant $C_4$, such that for any $k\in \nn$,
\begin{eqnarray}\label{upper bdd com}
\int_{I_j^k}\lf|\lf[b(y)-\az_{I_j}(b)\r]\riz(f_j)(y)\r|^p\ytz
&&\ls\int_{I_j^k}\frac{[m_\lz\lf(I_j\r)]^{p-1}\lf|b(y)-\az_{I_j}(b)\r|^p}{2^{kp}[m_\lz(I(x_j,\,y-x_j))]^p}\ytz\noz\\
&&\ls \frac1{2^{kp}}\frac{[m_\lz\lf(I_j\r)]^{p-1}}{[m_\lz(2^{k}I_j)]^p}\int_{2^{k+3}I_j}\lf|b(y)-\az_{I_j}(b)\r|^p\ytz\noz\\
&&\ls\frac{k^p}{2^{kp}}\frac{[m_\lz(I_j)]^{p-1}}{[m_\lz(2^{k}I_j)]^p}m_\lz\lf(2^{k+3}I_j\r)\noz\\
&&\le C_4\frac{k^p}{2^{kp}}\frac{[m_\lz(I_j)]^{p-1}}{[m_\lz(2^{k}I_j)]^{p-1}}.
\end{eqnarray}

Next, observe that $y>z$ for any $y\in I_j^k$ and $z\in I_j$.
By Proposition \ref{p-lower bdd riesz}, \eqref{fj proper-1}, \eqref{fj proper-3},
\eqref{equi osci} and  \eqref{lower bdd osci}, we have that
 \begin{eqnarray*}
\lf|\riz\lf[(b-\az_{I_j}(b))f_j\r](y)\r|&=&\int_{(I_{j,\,1}\cup I_{j,\,2})}|\riz(y,z)|\lf|\lf[b(z)-\az_{I_j}(b)\r]f_j(z)\r|\ztz\\
&\gs& \lf[m_\lz\lf(I_j\r)\r]^{-1/p}\int_{I_j}\frac{|b(z)-\az_{I_j}(b)|}{m_\lz(I(y,|y-z|))}z^{2\lz}\,dz\\
&\gs& \dz\lf[m_\lz\lf(I_j\r)\r]^{1/p'}\frac1{m_\lz(I(y,|y-x_j|))}.
\end{eqnarray*}
From this,  \eqref{equiv meas} and \eqref{ijk meas}, we deduce that there exists a positive constant $C_5$ such that
\begin{eqnarray}\label{low bdd com}
\int_{I_j^k}\lf|\riz\lf[(b-\az_{I_j}(b))f_j\r](y)\r|^p\ytz
&&\gs \dz^p\lf[m_\lz\lf(I_j\r)\r]^{p-1}\int_{I_j^k}\frac1{[m_\lz(I(y,|y-x_j|))]^p}\ytz\noz\\
&&\gs\dz^p\frac{[m_\lz\lf(I_j\r)]^{p-1}}{[m_\lz(2^{k}I_j)]^p}m_\lz\lf(I_j^k\r)\noz\\
&&\ge \dz^pC_5\frac{[m_\lz\lf(I_j\r)]^{p-1}}{[m_\lz(2^{k}I_j)]^{p-1}}.
\end{eqnarray}
Take $A_1$ large enough such that for any integer $k\ge \lfloor\log_2 A_1\rfloor$,
$$C_5\frac{\dz^p}{2^{p-1}}-C_4\frac{k^p}{2^{kp}}\ge C_5\frac{\dz^p}{2^p}.$$
By \eqref{com equiv}, \eqref{low bdd com} and \eqref{upper bdd com},  we conclude that for any integer $k\ge\lfloor\log_2 A_1\rfloor$,
\begin{eqnarray*}
&&\int_{I_j^k}\lf|[b, \riz]f_j(y)\r|^p\,\ytz\noz\\
&&\quad\ge\bigg[\frac1{2^{p-1}}\int_{I_j^k}\lf|\riz\lf[(b-\az_{I_j}(b))f_j\r](y)\r|^p\ytz\noz\\
&&\quad\quad-\int_{I_j^k}\lf|\lf[b(y)-\az_{I_j}(b)\r]\riz(f_j)(y)\r|^p\ytz\bigg]\noz\\
&&\quad\ge\lf(C_5\frac{\dz^p}{2^{p-1}}-C_4\frac{k^p}{2^{kp}}\r)
\frac{[m_\lz\lf(I_j\r)]^{p-1}}{[m_\lz(2^{k}I_j)]^{p-1}}\ge C_5\frac{\dz^p}{2^p}
\frac{[m_\lz\lf(I_j\r)]^{p-1}}{[m_\lz(2^{k}I_j)]^{p-1}}.
\end{eqnarray*}
This shows the  inequality \eqref{lower upper lpbdd riesz comm}.

Now we show the inequality \eqref{lower upper lpbdd riesz comm2}.
From $\supp(f_j)\subset \bar{I}_j$, \eqref{cz kernel condition-1}, \eqref{equi osci} and \eqref{fj proper-3},
we deduce that for any $y\in \rrp\setminus 2I_j$,
\begin{eqnarray*}
\lf|\riz\lf[(b-\az_{I_j}(b))f_j\r](y)\r|&\ls& \lf[m_\lz\lf(I_j\r)\r]^{-1/p}\int_{I_j}\frac{|b(z)-\az_{I_j}(b)|}{m_\lz(I(y,|y-z|))}z^{2\lz}\,dz\noz\\
&\ls& \lf[m_\lz\lf(I_j\r)\r]^{1/p'}\frac1{m_\lz(I(y,|y-x_j|))},
\end{eqnarray*}
from which together with \eqref{upper bdd riesz ope}, \eqref{b-bmo-bdd} and \eqref{reverse doubl},
it follows that for any $k\ge\lfloor \log_2A_1\rfloor$,
\begin{eqnarray*}
&&\int_{2^{k+1}I_j\setminus 2^kI_j}\lf|[b, \riz]f_j(y)\r|^p\,\ytz\\
&&\quad\ls\int_{2^{k+1}I_j\setminus 2^kI_j}\lf|\riz\lf([b-\az_{I_j}(b)]f_j\r)(y)\r|^p\ytz\\
&&\quad\quad+\int_{2^{k+1}I_j\setminus 2^kI_j}\lf|\lf[b-\az_{I_j}(b)\r]\riz(f_j)(y)\r|^p\ytz\\
&&\quad\ls [m_\lz(I_j)]^{p-1}\int_{2^{k+1}I_j\setminus 2^kI_j}\frac{1}{[m_\lz(I(y, |y-x_j|))]^p}\ytz \\
&&\quad\quad+r_j^p [m_\lz(I_j)]^{p-1}\int_{2^{k+1}I_j\setminus 2^kI_j}\frac{|b(y)-\az_{I_j}(b)|^p}{|y-x_j|^p [m_\lz(I(y, |y-x_j|))]^p}\ytz\\
&&\quad\ls\frac{k^p[m_\lz(I_j)]^{p-1}m_\lz(2^{k+1}I_j)}{2^{kp}[m_\lz(2^{k}I_j)]^p}+\frac{[m_\lz(I_j)]^{p-1}m_\lz(2^{k+1}I_j)}{[m_\lz(2^{k}I_j)]^p}\\
&&\quad\leq\tilde{C}_2\frac{[m_\lz(I_j)]^{p-1}}{[m_\lz(2^{k}I_j)]^{p-1}}.
\end{eqnarray*}
We finish the proof of Lemma \ref{l-cmo-contra}.
\end{proof}

\begin{proof}[\bf Proof of Theorem \ref{t-riesz compact}]

{\bf Sufficiency:}

We use the idea in \cite{u78}.
We first show that if $[b, \riz]$ is a compact operator on $\lpz$, then $b\in \cmoz$.
Since $[b, \riz]$ is compact on $\lpz$, $[b, \riz]$ is bounded on $\lpz$.
By Lemma \ref{l-bdd of riz}, we see that $b\in\bmoz$.
Without loss of generality, we may assume that $\|b\|_\bmoz=1$.
To show $b\in\cmoz$, we use a contradiction argument via Theorem \ref{t-cmo char}.
Observe that if $b\notin \cmoz$, $b$ does not satisfy at least one of (i)-(iii) in Theorem \ref{t-cmo char}.

We now consider the following three cases.

Case i), $b$ does not satisfy (i) in Theorem \ref{t-cmo char}. Then there exists $\dz\in(0, \fz)$
and a sequence $\{I_j\}_{j=1}^\fz$ of intervals satisfying \eqref{lower bdd osci}
and that $m_\lz\lf(I_j\r)\to0$ as $j\to\fz$.
Let $f_j$, $\wz C_1$, $\wz C_2$, $A_1$ be as in Lemma \ref{l-cmo-contra} and $A_2>A_1$ large enough
such that
$$A_3:=8^{(1-p)(2\lz+1)}\wz C_1\dz^pA_1^{(1-p)(2\lz+1)}>
\frac{2\wz C_2}{1-[\min(2^{2\lz},\,2)]^{1-p}}\frac1{[\min(2^{2\lz},\,2)]^{\lfloor\log_2 A_2\rfloor(p-1)}}.$$
Since $m_\lz\lf(I_j\r)\to 0$ as $j\to\fz$, we may choose a subsequence $\{I_{j_\ell}^{(1)}\}$ of $\{I_j\}$ such that
\begin{equation}\label{descreasing interval}
\frac{m_\lz(I_{j_{\ell+1}}^{(1)})}{m_\lz(I_{j_{\ell}}^{(1)})}<\frac1{(2A_2)^{2\lambda+1}}.
\end{equation}
For fixed $\ell$, $m\in \mathbb N$, denote
$$\mathcal J:=\lf(x_{j_\ell}^{(1)}+A_1r_{j_\ell}^{(1)}, x_{j_\ell}^{(1)}+A_2r_{j_\ell}^{(1)}\r),$$
$$\mathcal J_1:=\mathcal J\setminus \lf\{y\in\mathbb R_+: \lf|y-x_{j_{\ell+m}}^{(1)}\r|\le A_2r_{j_{\ell+m}}^{(1)}\r\}$$
and
$$\mathcal J_2:=\lf\{y\in\mathbb R_+: \lf|y-x_{j_{\ell+m}}^{(1)}\r|>A_2r_{j_{\ell+m}}^{(1)}\r\}.$$
Note that
$$\mathcal J_1\subset\lf\{y\in\mathbb R_+: \lf|y-x_{j_{\ell}}^{(1)}\r|\le A_2r_{j_{\ell}}^{(1)}\r\}\cap\mathcal J_2
\,\,{\rm and}\,\, \mathcal J_1=\mathcal J\setminus(\mathcal J\setminus\mathcal J_2).$$
 We then have
\begin{eqnarray}\label{low lpbdd comparing com}
&&\|\lf[b,\riz\r](f_{j_\ell})-\lf[b,\riz\r](f_{j_{\ell+m}})\|_{L^p(\mathbb R_+,\,dm_\lz)}\noz\\
&&\quad\ge\lf(\int_{\mathcal J_1}\lf|\lf[b,\riz\r](f_{j_\ell})(y)-\lf[b,\riz\r](f_{j_{\ell+m}})(y)\r|^p\ytz\r)^{1/p}\noz\\
&&\quad\ge\lf(\int_{\mathcal J_1}\lf|\lf[b,\riz\r](f_{j_\ell})(y)\r|^p\ytz\r)^{1/p}
-\lf(\int_{\mathcal J_2}\lf|\lf[b,\riz\r](f_{j_{\ell+m}})(y)\r|^p\ytz\r)^{1/p}\noz\\
&&\quad=\lf(\int_{\mathcal J\setminus(\mathcal J\setminus\mathcal J_2)}\lf|\lf[b,\riz\r](f_{j_\ell})(y)\r|^p\ytz\r)^{1/p}
-\lf(\int_{\mathcal J_2}\lf|\lf[b,\riz\r](f_{j_{\ell+m}})(y)\r|^p\ytz\r)^{1/p}\noz\\
&&\quad=:{\rm F_1}-{\rm F_2}.
\end{eqnarray}

We first consider the term ${\rm F_1}$. To begin with, we now estimate the measure of $\mathcal J\setminus\mathcal J_2$.  Assume that $E_{j_\ell}:=\mathcal J\setminus\mathcal J_2\not=\emptyset$.
Then $E_{j_\ell}\subset A_2I^{(1)}_{j_{\ell+m}}$.
Hence, we have
\begin{eqnarray}\label{ee1}
m_\lz\lf(E_{j_\ell}\r)\le m_\lz\lf(A_2I^{(1)}_{j_{\ell+m}}\r)\le (2A_2)^{2\lz+1}m_\lz\lf(I^{(1)}_{j_{\ell+m}}\r)<m_\lz\lf(I^{(1)}_{j_{\ell}}\r),
\end{eqnarray}
where the second inequality follows from the doubling condition \eqref{reverse doubl}, and the last inequality follows from \eqref{descreasing interval}.

Now let
$$I_{j_\ell}^{k}:=\lf(x^{(1)}_{j_\ell}+2^kr^{(1)}_{j_\ell},\,x^{(1)}_{j_\ell}+2^{k+1}r^{(1)}_{j_\ell}\r),\quad {\rm with\ } k\geq 1.$$
Then by \eqref{ijk meas} and \eqref{reverse doubl},
$$
m_\lz\lf(I_{j_\ell}^{k}\r)\gs \lf[\min\lf(2^{2\lz}, 2\r)\r]^k m_\lz\lf(I^{(1)}_{j_\ell}\r),
$$
which, together with \eqref{ee1}, implies that
$$ m_\lz\lf(I_{j_\ell}^{k}\r) \gs m_\lz\lf(E_{j_\ell}\r). $$

From this fact, it follows that there exist at most two intervals, $I_{j_\ell}^{k_0}$ and $I_{j_\ell}^{k_0+1}$,
such that $E_{j_\ell}\subset (I_{j_\ell}^{k_0}\cup
I_{j_\ell}^{k_0+1})$. By \eqref{lower upper lpbdd riesz comm} and \eqref{reverse doubl},
\begin{eqnarray*}
{\rm F}_1^p&&\ge\sum_{k=\lfloor\log_2 A_1\rfloor+1,\,k\not=k_0,\,k_0+1}^{\lfloor\log_2 A_2\rfloor}
\int_{I_{j_\ell}^k}\lf|\lf[b,\riz\r](f_{j_\ell})(y)\r|^p\ytz\\
&&\ge\wz C_1\dz^p\sum_{k=\lfloor\log_2 A_1\rfloor+1,\,k\not=k_0,\,k_0+1}^{\lfloor\log_2 A_2\rfloor}
\frac{[m_\lz(I^{(1)}_{j_\ell})]^{p-1}}{[m_\lz(2^{k}I^{(1)}_{j_\ell})]^{p-1}}\noz\\
&&\ge\wz C_1\dz^p\sum_{k=\lfloor\log_2 A_1\rfloor+3}^{\lfloor\log_2 A_2\rfloor}\frac{1}{2^{k(p-1)(2\lz+1)}}\noz\\
&&\ge 8^{(1-p)(2\lz+1)}\wz C_1\dz^pA_1^{(1-p)(2\lz+1)}=A_3.
\end{eqnarray*}
If  $E_{j_\ell}:=\mathcal J\setminus\mathcal J_2=\emptyset$, the inequality above still holds.

On the other hand, from \eqref{lower upper lpbdd riesz comm2} and \eqref{reverse doubl}, we deduce that
\begin{eqnarray*}
{\rm F}_2^p&&\le\sum_{k=\lfloor\log_2 A_2\rfloor}^{\fz}
\int_{2^{k+1}I_{j_{\ell+m}}^{(1)} \setminus 2^{k}I_{j_{\ell+m}}^{(1)}}\lf|\lf[b,\riz\r](f_{j_{\ell+m}})(y)\r|^p\ytz\\
&&\le\wz C_2\sum_{k=\lfloor\log_2 A_2\rfloor}^\fz
\frac{[m_\lz(I_{j_{\ell+m}}^{(1)})]^{p-1}}{[m_\lz(2^{k}I_{j_{\ell+m}}^{(1)})]^{p-1}}\noz\\
&&\le\wz C_2\sum_{k=\lfloor\log_2 A_2\rfloor}^\fz\frac{1}{[\min(2^{2\lz},\,2)]^{k(p-1)}}\\
&&\le \frac{\wz C_2}{1-[\min(2^{2\lz},\,2)]^{1-p}}\frac1{[\min(2^{2\lz},\,2)]^{\lfloor\log_2 A_2\rfloor(p-1)}}<A_3/2.
\end{eqnarray*}
By these two inequalities and \eqref{low lpbdd comparing com}, we get
\begin{eqnarray*}
&&\|\lf[b,\riz\r](f_{j_\ell})-\lf[b,\riz\r](f_{j_{\ell+m}})\|_{L^p(\mathbb R_+,\,dm_\lz)}\gs(A_3)^{1/p}.
\end{eqnarray*}
Thus, $\{[b,\riz]f_j\}_{j}$ is not relatively compact in $\lpz$, which implies that
$[b, \riz]$ is not compact on $\lpz$. Therefore, $b$ satisfies condition (i).

Case ii), $b$ violates (ii) in Theorem \ref{t-cmo char}. In this case, we also have that there exist
$\delta\in(0, \fz)$ and a sequence $\{I_j\}$ of intervals satisfying \eqref{lower bdd osci}
and that $m_\lz(I_j)\rightarrow\infty$ as $j\rightarrow\infty$. We take a subsequence $\{I_{j_\ell}^{(2)}\}$ of $\{I_j\}$ such that
\begin{equation}\label{increasing interval}
\frac{m_\lz\lf(I_{j_{\ell}}^{(2)}\r)}{m_\lz\lf(I_{j_{\ell+1}}^{(2)}\r)}<\frac1{(2A_2)^{2\lambda+1}}.
\end{equation}
We can use a similar method as in the previous case and redefine our sets in a reversed order. That is, for fixed $\ell$ and $m$, let
$$\widetilde{\mathcal J}:=\lf(x_{j_{\ell+m}}^{(2)}+A_1r_{j_{\ell+m}}^{(2)}, x_{j_{\ell+m}}^{(2)}+A_2r_{j_{\ell+m}}^{(2)}\r),$$
$$\widetilde{\mathcal J_1}:=\widetilde{\mathcal J}\setminus \lf\{y\in\mathbb R_+: \lf|y-x_{j_{\ell}}^{(2)}\r|\le A_2r_{j_{\ell}}^{(2)}\r\}$$
and
$$\widetilde{\mathcal J_2}:=\lf\{y\in\mathbb R_+: \lf|y-x_{j_{\ell}}^{(2)}\r|>A_2r_{j_{\ell}}^{(2)}\r\}.$$
Then we have that
$$\widetilde{\mathcal J_1}\subset\lf\{y\in\mathbb R_+: \lf|y-x_{j_{\ell+m}}^{(2)}\r|\le A_2r_{j_{\ell+m}}^{(2)}\r\}\cap\widetilde{\mathcal J_2}
\,\, {\rm and}\,\,
\widetilde{\mathcal J_1}=\widetilde{\mathcal J}\setminus\lf(\widetilde{\mathcal J}\setminus\widetilde{\mathcal J_2}\r).$$
As in Case i), by Lemma \ref{l-cmo-contra} and \eqref{increasing interval},
we see that $[b, \riz]$ is not compact on $\lpz$. This contradiction implies that
$b$ satisfies (ii) of Theorem \ref{t-cmo char}.

Case iii),  condition (iii) in Theorem \ref{t-cmo char} does not hold for $b$. Then there exists $\delta>0$ such that for any $R>0$, there exists $I\subset[R,\,\infty)$ with $M_\lz(b,\,I)>\delta$.
We claim that for the $\delta$ above, there exists a sequence $\{I^{(3)}_j\}_j$ of intervals such that for any $j$,
\begin{equation}\label{lower bdd osci-2}
M_\lz(b,\,I^{(3)}_j)>\delta,
\end{equation}
and that for any $\ell\neq m$,
\begin{equation}\label{I-j-pro}
A_2I^{(3)}_{\ell}\bigcap A_2I^{(3)}_m=\emptyset.
\end{equation}
In fact, let $C_\delta>0$ to be determined later. Then for $R_1>C_\delta$,
there exists an interval $I^{(3)}_1:=I(x_1,\,r_1)\subset[R_1,\,\infty)$ such that \eqref{lower bdd osci-2} holds. Similarly, for $R_j:=x_{j-1}+4A_2C_\delta$, $j=2, 3, \ldots$, there exists $I^{(3)}_j:=I(x_j,\,r_j)\subset[R_j,\,\infty)$ satisfying \eqref{lower bdd osci-2}.
Repeating this procedure, we obtain $\{I^{(3)}_j\}_j$ satisfying \eqref{lower bdd osci-2} for each $j$. Moreover,
as $b$ satisfies the condition (ii) in Theorem \ref{t-cmo char}, for $\delta$ aforementioned, there exists a constant $\widetilde{C_\delta}$ such that $$M_\lz(b,\,I)<\delta$$
for any interval $I$ satisfying $m_\lz(I)>\widetilde{C_\delta}$.
This together with the choice of $\{I^{(3)}_j\}$ implies that $m_\lz(I^{(3)}_j)\leq\widetilde{C_\delta}$ for all $j$.
Since for any $j$, $m_\lz(I^{(3)}_j)\sim x_j^{2\lz}r_j>r_j^{2\lz+1}$,
it follows that
$r_j<C\widetilde{C_\delta}^{\frac{1}{2\lz+1}}=:C_\delta.$
Therefore, by the choice of $\{I^{(3)}_j\}_j$, \eqref{I-j-pro} holds. This implies the claim.

Now we define
$$\widetilde{\widetilde{\mathcal J_1}}:=\lf(x_{\ell}+A_1r_{\ell}, x_{\ell}+A_2r_{\ell}\r),$$
and
$$\widetilde{\widetilde{\mathcal J_2}}:=\lf\{y\in\mathbb R_+: \lf|y-x_{\ell+m}\r|> A_2r_{\ell+m}\r\}.$$
Note that $\widetilde{\widetilde{\mathcal J_1}}\subset\widetilde{\widetilde{\mathcal J_2}}$.
Thus, similar to the estimates of ${\rm F_1}$ and ${\rm F_2}$ in Case i), for any $\ell$, $m$, we get
\begin{eqnarray*}
&&\|\lf[b,\riz\r](f_{\ell})-\lf[b,\riz\r](f_{\ell+m})\|_{L^p(\mathbb R_+,\,dm_\lz)}\\
&&\quad\ge\lf\{\int_{\widetilde{\widetilde{\mathcal J_1}}}\lf|\lf[b,\riz\r](f_{\ell})(y)-\lf[b,\riz\r](f_{\ell+m})(y)\r|^p\ytz\r\}^{1/p}\\
&&\quad\ge\lf\{\int_{\widetilde{\widetilde{\mathcal J_1}}}\lf|\lf[b,\riz\r](f_{\ell})(y)\r|^p\ytz\r\}^{1/p}-
\lf\{\int_{\widetilde{\widetilde{\mathcal J_2}}}\lf|\lf[b,\riz\r](f_{\ell+m})(y)\r|^p\ytz\r\}^{1/p}\\
&&\quad\gs\lf(A_3\r)^{1/p}.
\end{eqnarray*}
This contradicts to the compactness of $[b, \riz]$ on $L^p(\mathbb R_+,\,dm_\lz)$,
so $b$ also satisfies condition (iii)
in Theorem \ref{t-cmo char}.

{\bf Necessity:}

To see the converse, we show that when $b\in\cmoz$, the commutator $[b, \riz]$ is compact on $\lpz$.
For any $\ez>0$, there exists $b_\ez\in\cd$ such that
$$\|b-b_\ez\|_\bmoz<\ez$$
and
$$\|[b, \riz]-[b_\ez, \riz]\|_{\lpz\to\lpz}\ls\|b-b_\ez\|_\bmoz\ls\ez.$$
Thus, it suffices to show that $[b, \riz]$ is a compact operator for $b\in\cd$.

Let $b\in\cd$, to show $[b,\riz]$ is compact on $\lpz$, it suffices to show that for every bounded subset $\mathcal{F}\in\lpz$,
$[b,\riz]\mathcal{F}$ is relatively compact. Thus, we only need to show that $[b,\riz]\mathcal{F}$ satisfies the
conditions (a)---(c) in Theorem \ref{t-fre kol}. We first point out that by Lemma \ref{l-bdd of riz}
and the fact that $b\in \bmoz$, $[b,\riz]$ is bounded on $\lpz$, which implies $[b,\riz]\mathcal{F}$
satisfies (a) in Theorem \ref{t-fre kol}. Next, since $b\in\cd$, by \eqref{cz kernel condition-1}
and the H\"older inequality, there exists $M$ such that
for any $x>M$,
\begin{equation*}
\lf|[b, \riz]f(x)\r|\le |b(x)||\riz f(x)|+|\riz(bf)(x)|\ls \|f\|_\lpz \frac1{m_\lz(I(x, x))}.
\end{equation*}
Hence (b)  in Theorem \ref{t-fre kol} holds for $[b,\riz]\mathcal{F}$.
Therefore, it remains to prove $[b,\riz]\mathcal{F}$ also satisfies (c).

Let $\ez$ be a fixed positive constant in $(0,\frac12)$ and $z\in\rrp$ small enough. Then for any $x\in\rrp$,
\begin{eqnarray*}
&&[b, \riz]f(x)-[b, \riz]f(x+z)\\
&&\quad=\inzf \riz(x, y)[b(x)-b(y)] f(y)\ytz-\inzf \riz(x+z, y)[b(x+z)-b(y)] f(y)\ytz\\
&&\quad=\int_{|x-y|>\ez^{-1} z}\riz(x, y)[b(x)-b(x+z)]f(y)\ytz\\
&&\quad\quad+\int_{|x-y|>\ez^{-1} z}[\riz(x, y)-\riz(x+z,y)][b(x+z)-b(y)]f(y)\ytz\\
&&\quad\quad+\int_{|x-y|\le\ez^{-1} z}\riz(x,y)[b(x)-b(y)]f(y)\ytz\\
&&\quad\quad-\int_{|x-y|\le\ez^{-1} z}\riz(x+z,y)[b(x+z)-b(y)]f(y)\ytz=:\sum_{j=1}^4{\rm L}_i.
\end{eqnarray*}

From \eqref{cz kernel condition-2} and $\ez\in(0, 1/2)$, it follows that
$$|{\rm L}_2|\ls |z|\int_{|x-y|>\ez^{-1}z}\frac{|f(y)|}{m_\lz(I(x, |x-y|))|x-y|}\,\ytz.$$
By this and the H\"older inequality, we have that
\begin{eqnarray*}
\inzf|{\rm L_2}|^p\xtz&\ls&z^p\inzf\lf[\inzf\lf(\frac{\chi_{|x-y|>\ez^{-1}z}(y)}
{|x-y|m_\lz(I(x, |x-y|))}\r)^{1/p'+1/p}|f(y)|\ytz\r]^p\xtz\nonumber\\
&\ls&z^p\inzf\lf\{\lf[\int_{|x-y|>\ez^{-1} z}\frac{y^{2\lz}}{|x-y|m_\lz(I(x, |x-y|))}\,dy\r]^{p/p'}\r.\noz\\
&\quad&\quad\times\lf.\int_{|x-y|>\ez^{-1} z}\frac{|f(y)|^p}{|x-y|m_\lz(I(x, |x-y|))}y^{2\lz}\,dy\r\}\,\xtz\noz\\
&\ls&z^p(\ez z^{-1})^{(p/p')+1}\|f\|^p_\lpz\ls \ez^p\|f\|^p_\lpz,
\end{eqnarray*}
where the last-to-second inequality follows from the fact that
\begin{eqnarray*}
&&\int_{|x-y|>\ez^{-1} z}\frac{y^{2\lz}}{|x-y|m_\lz(I(x, |x-y|))}\,dy\\
&&\quad\sim\sum_{k=0}^\fz\frac1{2^k\ez^{-1}z}\int_{2^k\ez^{-1}z<|x-y|\le2^{k+1}\ez^{-1}z}\frac{y^{2\lz}}{m_\lz(I(x, 2^k\ez^{-1}z))}\,dy\\
&&\quad\ls\sum_{k=0}^\fz\frac1{2^k\ez^{-1}z}\frac{m_\lz(I(x, 2^{k+1}\ez^{-1}z))}{m_\lz(I(x, 2^k\ez^{-1}z))}\ls \ez z^{-1}.
\end{eqnarray*}
By \eqref{cz kernel condition-1}, the fact that $b\in\cd$ and the mean value theorem, we conclude that
$$|{\rm L}_3|\ls \int_{|x-y|\le\ez^{-1} z}\frac{|x-y|}{m_\lz(I(x, |x-y|))}|f(y)|\,\ytz$$
and
$$|{\rm L}_4|\ls \int_{|x-y|\le\ez^{-1} z}\frac{|x+z-y|}{m_\lz(I(x+z, |x+z-y|))}|f(y)|\,\ytz.$$
Then by the fact that
\begin{eqnarray*}
&&\int_{|x-y|\le\ez^{-1} z}\frac{|x-y|}{m_\lz(I(x, |x-y|))}y^{2\lz}\,dy\\
&&\quad\sim\sum_{k=-\fz}^{-1}2^k\ez^{-1}z\int_{2^k\ez^{-1}z<|x-y|\le2^{k+1}\ez^{-1}z}\frac{y^{2\lz}}{m_\lz(I(x, 2^k\ez^{-1}z))}\,dy\\
&&\quad\ls\sum_{k=-\fz}^{-1}2^k\ez^{-1}z\frac{m_\lz(I(x, 2^{k+1}\ez^{-1}z))}{m_\lz(I(x, 2^k\ez^{-1}z))}\ls \ez^{-1} z,
\end{eqnarray*}
we see that
\begin{eqnarray*}
\inzf|{\rm L_3}|^p\xtz&\ls&\inzf\lf\{\lf[\int_{|x-y|\le\ez^{-1} z}\frac{|x-y|}{m_\lz(I(x, |x-y|))}y^{2\lz}\,dy\r]^{p/p'}\r.\noz\\
&\quad&\quad\times\lf.\int_{|x-y|\le\ez^{-1} z}\frac{|x-y||f(y)|^p}{m_\lz(I(x, |x-y|))}y^{2\lz}\,dy\r\}\,\xtz\noz\\
&\ls&(\ez^{-1} z)^{p}\|f\|^p_\lpz,
\end{eqnarray*}
and
\begin{eqnarray*}
\inzf|{\rm L_4}|^p\xtz&\ls&\inzf\lf\{\lf[\int_{|x+z-y|\le{\ez^{-1} z+z}}\frac{|x+z-y|}{m_\lz(I(x+z, |x+z-y|))}y^{2\lz}\,dy\r]^{p/p'}\r.\noz\\
&\quad&\quad\times\lf.\int_{|x+z-y|\le{\ez^{-1} z+z}}\frac{|x+z-y||f(y)|^p}{m_\lz(I(x+z, |x+z-y|))}y^{2\lz}\,dy\r\}\,\xtz\noz\\
&\ls&(\ez^{-1} z+z)^{p}\|f\|^p_\lpz\\
&\ls&(\ez^{-1}z)^{p}\|f\|^p_\lpz.
\end{eqnarray*}

Moreover, observe that
$$|{\rm L}_1|\le |b(x)-b(x+z)|\sup_{t>0}\lf|\int_{|x-y|>t}\riz(x, y)f(y)\ytz\r|=:|b(x)-b(x+z)| \riz_\ast f(x).$$
Since by i) and ii) of Lemma \ref{l-RieszCZ}, $\riz(x,y)$ is a Calder\'on-Zygmund kernel in space of homogeneous type,
we see that $\riz_\ast$ is bounded on $\lpz$ for any $p\in(1, \fz)$; see, for example, \cite{hyy} and \cite{bfbmt}.
Then we have that
\begin{equation*}
\inzf|{\rm L_1}|^p\xtz\ls\inzf\lf[|b(x)-b(x+z)| \riz_\ast f(x)\r]^p\xtz.
\end{equation*}
As $b$ is uniformly continuous, by letting $z$ small enough depending on $\epsilon$, we have that
\begin{equation*}
\inzf|{\rm L_1}|^p\xtz\ls\ez^p\|f\|_\lpz^p.
\end{equation*}
Combining the estimates of ${\rm L}_i,\,i\in\{1, 2,3,4\}$, we conclude that
\begin{eqnarray*}
&&\lf[\inzf\lf|[b, \riz]f(x)-[b, \riz]f(x+z)\r|^p\xtz\r]^{1/p}\\
&&\quad\ls\sum_{i=1}^4\lf(\inzf|{\rm L}_i|^p\xtz\r)^{1/p}\ls\ez\|f\|_\lpz.
\end{eqnarray*}
This shows that $[b,\riz]\mathcal{F}$ satisfies the condition (c) in Theorem \ref{t-fre kol}. Hence,
$[b, \riz]$ is a compact operator. This finishes the proof of Theorem \ref{t-riesz compact}.
\end{proof}

{\bf Acknowledgement:} The authors would like to thank the referee for careful reading and checking, and for all the helpful
suggestions and comments, which helps to make this paper more readable.



\smallskip

Department of Mathematics, Macquarie University, NSW, 2109, Australia.

\smallskip

{\it E-mail}: \texttt{xuan.duong@mq.edu.au}

\vspace{0.3cm}



Department of Mathematics, Macquarie University, NSW, 2109, Australia.

\smallskip

{\it E-mail}: \texttt{ji.li@mq.edu.au}

\vspace{0.3cm}



%
%
%

School of Mathematical Sciences, Xiamen University, Xiamen 361005,  China
\smallskip

{\it E-mail}: \texttt{suzhen.860606@163.com}

\vspace{0.3cm}

School of Mathematical Sciences, Xiamen University, Xiamen 361005,  China
\smallskip

{\it E-mail}: \texttt{huoxwu@xmu.edu.cn}

\vspace{0.3cm}


School of Mathematical Sciences, Xiamen University, Xiamen 361005,  China

\smallskip

{\it E-mail}: \texttt{dyyang@xmu.edu.cn }
\end{document}